\documentclass[reqno]{amsproc}
\usepackage{amssymb,amsmath}
\usepackage{amsfonts}
\usepackage{amsthm}
\usepackage{longtable}
\usepackage{graphicx}
\usepackage{wrapfig}
\usepackage{url}
\usepackage[colorlinks,citecolor=red,urlcolor=blue,bookmarks=false,hypertexnames=true]{hyperref}
\usepackage{mathtools}

\usepackage{a4}

\newtheorem{prop}{Proposition}
\newtheorem{example}{Example}
\newtheorem{theorem}{Theorem}
\newtheorem{lemma}{Lemma}
\newtheorem{cor}{Corollary}

\newcommand{\Irr}{\textnormal{Irr}}
\newcommand{\cd}{\textnormal{cd}}
\newcommand{\nl}{\textnormal{Nl}}
\newcommand{\lin}{\textnormal{Lin}}

\newcommand{\Core}{\textnormal{Core}}

\newcommand{\ind}{\mathord\uparrow} 

\newcommand{\irr}{\textnormal{irr}}
\newcommand{\GL}{\textnormal{GL}}

\title{On the representation dimension of finite $p$-groups}
\author{Gurleen Kaur}
\address{Department of Mathematics, Indian Institute of Technology Ropar, Punjab 140001, India}
\email{gurleenkaur992gk@gmail.com}
\author{Amit Kulshrestha}
\address{Indian Institute of Science Education and Research Mohali, Knowledge City, Sector 81, Mohali 140306, India}
\email{amitk@iisermohali.ac.in}
\author{Ayush Udeep}
\address{SRM Institute of Science and Technology, Kattankulathur campus, SRM Nagar, Chennai 603203, India}
\email{udeepayush@gmail.com}
\subjclass[2010]{20D15, 20C15}
\keywords{representation dimension; direct product of groups; VZ $p$-groups; Camina $p$-groups; metacyclic $p$-groups; groups of order $p^6$}

\begin{document}

\maketitle

\begin{abstract}
    For a finite group $G$, the representation dimension $\delta(G)$ is the least dimension of a faithful complex representation of $G$. We prove that $\delta(H\times K) = \delta(H) + \delta(K)$ whenever $H$ and $K$ are $p$-groups for a fixed prime $p$, and show by example that this additivity can fail more generally for nilpotent groups. We also determine $\delta(G)$ for several important classes of non-abelian $p$-groups, {\it viz.} VZ groups, Camina groups, and metacyclic groups; and compute $\delta(G)$ for all groups of order $p^6$ ($p\geq 5$), organized by their isoclinism family.
\end{abstract}

\section{Introduction}

Throughout this article, all groups are finite, and all representations are over the complex field. The \emph{representation dimension} of a finite group $G$, denoted by $\delta(G)$, is the least dimension of a faithful representation of $G$ (note that the regular representation ensures the existence of a faithful representation of $G$).  In other words, $\delta(G)$ is the smallest positive integer $n$ for which $G$ is embedded into $\GL_{n}(\mathbb{C})$. Computing the representation dimension is one of the classical problems in the representation theory of finite groups. Its investigation has also led to its link with the essential dimension of $G$ (see \cite{KM08} for further details).
In this direction, a related quantity is the minimal dimension of a faithful irreducible representation of $G$, denoted by $\delta_{\irr}(G)$. 
Note that a group $G$ may not have a faithful irreducible representation, and hence $\delta_{\irr}(G)$ may not exist. For example, $\delta_{\irr}(C_{2} \times C_{2})$ does not exist. However, if $\delta_{\irr}(G)$ exists, then $\delta(G) \leq \delta_{\irr}(G)$. Another quantity, the minimal dimension of a faithful complex projective representation
of $G$, has been well studied for simple groups $G$ (see \cite{TZ00}).

Investigation of minimal dimension of a faithful representation of $p$-groups was started by Meyer and Reichstein \cite{MR10}. Note that we denote primes by $p$ or $q$. As a consequence of \cite[Theorem 1.2]{MR10}, if $G$ is a $p$-group and $\rho$ is a faithful representation of $G$ of least dimension, then $\rho$ decomposes into exactly $r$ 
irreducible representations of $G$, where $r$ denotes the rank of the center of $G$ 
(Bardestani et al. provided an alternative approach in \cite{BMS16}). Later, Cernele et al. \cite{CKR11} coined the term ``representation dimension" and computed an upper bound for the representation dimension of a $p$-group.
In \cite{KKS23}, Kaur et al. computed the representation dimension for some finite $p$-groups, in particular, for all $p$-groups of order up to $p^{5}$, for
direct products of such groups with certain conditions, and for groups with certain conditions on nonlinear irreducible characters.

In this article, we investigate the representation dimension of the direct product of various classes of groups. We begin with the following result. For notational convenience, $\cd(G)$ denotes the set of all the irreducible character degrees of $G$. 
Note that a group is called \emph{normally monomial} if all of its irreducible characters are induced from linear characters of some normal subgroups. Normally monomial groups covers a large class of solvable groups, in particular, metabelian groups \cite{B69}. 
\begin{theorem} \label{thm:HtimesK}
    For a fixed prime $p$, let $H$ and $K$ be normally monomial $p$-groups with cyclic centers and let $\psi$ be a faithful character of least degree of $H\times K$. Then, $\psi$ must decompose as $\chi + \eta$, for some irreducible characters $\chi, \eta$ of $H\times K$ with $\chi(1) = \max\cd(H)$ and $\eta(1) = \max\cd(K)$.
\end{theorem}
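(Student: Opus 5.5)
We will argue in three steps: first show that $\psi$ has exactly two irreducible constituents, then identify their kernels, and finally pin down their degrees.

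\textbf{Step 1: $\psi$ has exactly two constituents.} Since $Z(H)$ and $Z(K)$ are cyclic, the center $Z(H\times K)=Z(H)\times Z(K)$ has rank $2$. By the consequence of \cite[Theorem 1.2]{MR10} recalled in the introduction, $\psi=\chi+\eta$ with $\chi,\eta\in\Irr(H\times K)$. Every irreducible character of $H\times K$ is of the form $\alpha\times\beta$ with $\alpha\in\Irr(H)$ and $\beta\in\Irr(K)$. So write $\chi=\alpha_1\times\beta_1$ and $\eta=\alpha_2\times\beta_2$.

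\textbf{Step 2: the kernels.} Let $\Omega_1(Z(H))=\langle z\rangle$ and $\Omega_1(Z(K))=\langle w\rangle$, both of order $p$. Faithfulness of $\psi$ is equivalent to $\ker\chi\cap\ker\eta$ containing no central element of order $p$. The subgroup $\Omega_1(Z(H\times K))=\langle z\rangle\times\langle w\rangle$ is elementary abelian of order $p^2$. Each $\ker\chi\cap\Omega_1(Z)$ and $\ker\eta\cap\Omega_1(Z)$ has order at least $p$, since the restriction of an irreducible character to the center is a multiple of a linear character, whose image in $\Omega_1$ is cyclic. Faithfulness forces these two kernels to meet trivially on $\Omega_1(Z)$. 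So after relabelling, one of two cases holds:
\begin{itemize}
\item[(a)] $\chi$ is nontrivial on $z$ and $\eta$ is nontrivial on $w$;
\item[(b)] one of the kernels is a diagonal subgroup $\langle z w^{i}\rangle$.
\end{itemize}
In every case, from the pair we can extract that some constituent restricts faithfully on $\langle z\rangle$, so its $H$-factor $\alpha$ is faithful on $Z(H)$ at order $p$. Similarly some constituent has its $K$-factor nontrivial on $w$.

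\textbf{Step 3: the degrees.} Here normal monomiality is used. The key lemma, which I expect to be the main obstacle, is the following: for a normally monomial $p$-group $H$ with cyclic center, every $\alpha\in\Irr(H)$ that is nontrivial on $\Omega_1(Z(H))$ is faithful and has degree $\max\cd(H)$. Faithfulness follows because every nontrivial normal subgroup meets $\Omega_1(Z(H))$. For the degree, I would write $\alpha=\lambda\ind^H$ with $\lambda$ linear on a normal subgroup $N$. Then one compares with any $\theta\in\Irr(H)$ of maximal degree, using that $\alpha$ faithful forces $N$ to be a maximal abelian normal subgroup on which $\lambda$ is faithful on $Z$. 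The aim is to show $|H:N|$ is maximal, possibly by instead taking $\theta$ of maximal degree and twisting: $\theta\cdot\mu$, with $\mu$ a suitable character, to produce a faithful character of degree $\max\cd(H)$. If the lemma only gives "faithful implies degree $\geq$ something", minimality of $\psi$ finishes the argument.

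\textbf{Conclusion.} With the lemma in hand, compare $\psi$ with $(\alpha\times 1)+(1\times\beta)$, where $\alpha\in\Irr(H)$ and $\beta\in\Irr(K)$ are faithful of degrees $\max\cd(H)$ and $\max\cd(K)$. This comparison character is faithful of degree $\max\cd(H)+\max\cd(K)$. On the other hand, by Step 2 and the lemma, $\chi(1)+\eta(1)=\alpha_1(1)\beta_1(1)+\alpha_2(1)\beta_2(1)$ has one summand with $\alpha_i(1)=\max\cd(H)$ and one with $\beta_j(1)=\max\cd(K)$. If these occur in the same constituent, that constituent alone already has degree $\max\cd(H)\max\cd(K)\ge\max\cd(H)+\max\cd(K)$ (for nonabelian factors), giving a contradiction or forcing equality. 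Minimality then forces the second constituent to be linear, and the factors to separate as claimed. Otherwise the constituents split as $\max\cd(H)\cdot\beta_1(1)$ and $\alpha_2(1)\cdot\max\cd(K)$, and minimality forces $\beta_1(1)=\alpha_2(1)=1$. The cases where $H$ or $K$ is abelian (so that $\max\cd=1$) need a separate easy check.
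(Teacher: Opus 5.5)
There is a genuine gap, and it sits where you expected: the degree half of your key lemma is never proved. This is the only place normal monomiality enters. The faithfulness half is fine, since a nontrivial normal subgroup of $H$ meets the cyclic center and so contains $\langle z\rangle$. The degree sketch does not close, for two reasons:
\begin{itemize}
\item Producing \emph{some} faithful irreducible character of degree $\max\cd(H)$ by twisting only reproves the upper bound. Your final count needs that \emph{every} $\alpha\in\Irr(H)$ nontrivial on $z$ has degree exactly $\max\cd(H)$. Also, twisting by a linear character cannot change whether $z$ lies in the kernel, since $z\in H'$ for non-abelian $H$.
\item The fallback ``faithful implies degree $\geq$ something'' is not enough either. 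Any lower bound weaker than $\max\cd(H)$ fails to pin down $\chi(1)$ and $\eta(1)$.
\end{itemize}

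The missing step is short. Write $\alpha=\lambda\ind_N^H$ with $N\trianglelefteq H$ and $\lambda\in\lin(N)$. Since $N'$ is normal in $H$ and $N'\le\ker\lambda$, we get $N'\le\Core_H(\ker\lambda)=\ker\alpha=1$, so $N$ is abelian. For any $\theta\in\Irr(H)$, $\theta_N$ has a linear constituent $\mu$. By Frobenius reciprocity $\theta$ is a constituent of $\mu\ind_N^H$, so $\theta(1)\le|H:N|=\alpha(1)$, and hence $\alpha(1)=\max\cd(H)$. This is Mann's Proposition 3, the paper's Lemma 2. Your key lemma is exactly the contrapositive of the paper's Lemma 3.

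With that filled in, your route is correct and more economical than the paper's. The paper proves Lemma 3 and then eliminates eight of nine configurations of $(\chi(1),\eta(1))$ relative to $(p^b,p^m)$. You instead use that $(z,1)\in\ker(\alpha_i\times\beta_i)$ if and only if $z\in\ker\alpha_i$. So faithfulness of $\psi$ directly forces some $H$-factor nontrivial on $z$ and some $K$-factor nontrivial on $w$, hence both of maximal degree, and only two placements remain. Your case split (a)/(b) in Step 2 is not needed for this.

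One correction in your conclusion: when both maximal-degree factors lie in the same constituent and $H,K$ are non-abelian, there is no ``equality'' subcase. That constituent has degree $p^{b+m}$ and the other has degree at least $1$. Since $p^{b+m}+1-(p^b+p^m)=(p^b-1)(p^m-1)\ge 1$, this placement is simply impossible.
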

Further, we prove that for a fixed prime $p$, if $H$ and $K$ are non-abelian $p$-groups, then $\delta(H\times K) = \delta(H)+\delta(K)$ (see Theorem \ref{thm:HtimesKpgroup}). We also provide counterexamples that imply that if $H$ and $K$ are nilpotent, then $\delta(H\times K)$ may not be equal to $\delta(H)+\delta(K)$. Interestingly, if $K$ is a cyclic group and $H$ is either a non-abelian $p$-group with cyclic center and $\cd(H) = \{ 1, p^a \}$ or $\{ 1, p, p^b \}$, or a normally monomial $p$-group with cyclic center, then $\delta(H\times K) = \delta(H) + \delta(K)$ (see Theorems \ref{thm:HtimesA1} and \ref{thm:HtimesA2}).
We also compute the representation dimension for some important classes of non-abelian $p$-groups, namely VZ $p$-groups, Camina $p$-groups, metacyclic $p$-groups and groups of order $p^6$ ($p\geq 5$ is a prime) (see Propositions \ref{prop:VZgroup}--\ref{prop:metacyclic} and Table \ref{t:delta G p6}.
Representation dimension for groups of order $2^6$ and $3^6$ can be obtained easily by using the {\sf GAP} \cite{GAP} function {\tt EmbeddingDegree} developed by Kaur et al. \cite{KKS23}.

\section{Preliminaries} \label{sec:preliminaries}
In this section, we summarize notations and some background results that we use throughout the article.

For a group $G$, $\mathcal{Z}(G)$ and $G'$ denote the center and the commutator subgroup of $G$, respectively. By $d(G)$, we denote the minimal number of generators of $G$. We denote the trivial element or trivial subgroup of $G$ by 1, depending on the context. For a subgroup $H$ of $G$, the core of $H$ in $G$, denoted by $\Core_{G}(H)$, is the largest normal subgroup of $G$ contained in $H$. For a character $\chi$ of $G$, $\ker(\chi)$ denotes the subgroup $\{ g\in G: \chi(g) = \chi(1) \}$ of $G$. 
By $\Irr(G), \lin(G)$ and $\nl(G)$, we denote the set of all the irreducible characters, the set of all the linear characters, and the set of all the non-linear irreducible char
acters of $G$, respectively. For a subgroup $H$ of $G$, and an irreducible character $\lambda$ of $H$, the induced character of $\lambda$ from $H$ to $G$ is denoted by $\lambda\ind_{H}^{G}$. 

It is known that two complex representations are isomorphic if and only if the characters afforded by them are equal. Hence, to study the representation dimension, we often take the help of characters. Let $\chi$ be a faithful character of $G$ of least degree, that is, $\chi(1)=\delta(G)$. Then, $\chi=\chi_{1}+\chi_{2}+\cdots+\chi_{m},$ for some distinct irreducible characters $\chi_{i}$'s of $G$. Denote the set $\{ \chi_{1},\chi_{2},\cdots,\chi_{m} \}$ by $\mathcal{X}$. Then,
\begin{equation} \label{eq:SetXProperty}
    \bigcap_{\chi \in \mathcal{X}}\ker(\chi) = 1 \text{ and } \bigcap_{\chi \in Y}\ker(\chi) \neq 1 \text{ for all } Y \subsetneq \mathcal{X}.
\end{equation}
In this case, we say that $\mathcal{X}$ {\it affords} $\delta(G)$.
In \cite{BMS16}, Bardestani et al. obtained the following result about the cardinality of such a set $\mathcal{X}$ when $G$ is a $p$-group.   
 \begin{lemma} \textnormal{\cite[Lemma 3.5]{BMS16}} \label{lemma:Bardestanietal}
    Let $G$ be a non-abelian $p$-group. Then, $|\mathcal{X}| = d(\mathcal{Z}(G))$, for any $\mathcal{X}\subset \Irr(G)$ affording $\delta(G)$.
\end{lemma}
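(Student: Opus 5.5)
The plan is to reduce the faithfulness condition \eqref{eq:SetXProperty} to a linear-algebra statement over $\mathbb{F}_p$, using the socle of the center. Put $Z=\mathcal{Z}(G)$ and $\Omega=\Omega_1(Z)=\{z\in Z : z^p=1\}$. Since $Z$ is an abelian $p$-group, $\Omega$ is elementary abelian of rank $r=d(Z)$, so $\Omega\cong\mathbb{F}_p^{\,r}$.

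The first step is the standard $p$-group fact that every nontrivial normal subgroup $N$ of $G$ satisfies $N\cap Z\neq 1$. Then $N\cap Z$ is a nontrivial $p$-group, so it contains an element of order $p$, and hence $N\cap\Omega\neq 1$. Each kernel $\ker(\chi)$ is normal, and so is any intersection of kernels. Therefore, for any $Y\subseteq\Irr(G)$,
\[
\bigcap_{\chi\in Y}\ker(\chi)=1 \iff \Omega\cap\bigcap_{\chi\in Y}\ker(\chi)=1 .
\]

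The second step linearizes the right-hand side. For $\chi\in\Irr(G)$, Schur's lemma gives $\chi|_{Z}=\chi(1)\lambda_\chi$ for a linear character $\lambda_\chi$ of $Z$. It follows that $\ker(\chi)\cap\Omega=\ker(\mu_\chi)$, where $\mu_\chi=\lambda_\chi|_{\Omega}$. We identify $\widehat{\Omega}=\mathrm{Hom}(\Omega,\mathbb{C}^\times)$ with the dual space $\Omega^{*}\cong\mathbb{F}_p^{\,r}$. Then $\bigcap_{\chi\in Y}\ker(\mu_\chi)$ is the annihilator of the subspace spanned by $\{\mu_\chi:\chi\in Y\}$. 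By duality, this annihilator is trivial exactly when the $\mu_\chi$, $\chi\in Y$, span $\widehat{\Omega}$.

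The last step combines the two equivalences. Condition \eqref{eq:SetXProperty} says that $\{\mu_\chi:\chi\in\mathcal{X}\}$ spans $\widehat{\Omega}$, while no proper subfamily does. A spanning family with no spanning proper subfamily consists of distinct vectors and is a basis. Hence $|\mathcal{X}|=\dim\widehat{\Omega}=r=d(\mathcal{Z}(G))$.

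The only real points to check are the reduction from arbitrary normal subgroups to $\Omega$, which uses that $G$ is a $p$-group, and the claim that $\ker(\chi)\cap\Omega$ is governed by the central character. The latter is immediate from $\chi|_Z=\chi(1)\lambda_\chi$, since $\chi(z)=\chi(1)$ exactly when $\lambda_\chi(z)=1$. I expect the first point to be the main thing to justify carefully. The non-abelian hypothesis is not actually used by this argument.
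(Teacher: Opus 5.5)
Your proof is correct. It is essentially the standard argument behind \cite[Lemma 3.5]{BMS16} (going back to \cite{MR10}), which the paper quotes without proof: faithfulness is detected on $\Omega_1(\mathcal{Z}(G))$, and minimality forces the restricted central characters to form a basis of its dual, so that $|\mathcal{X}| = d(\mathcal{Z}(G))$. Your remark that the non-abelian hypothesis is never used is also right; the argument in fact shows that every $\mathcal{X}$ satisfying \eqref{eq:SetXProperty} has exactly $d(\mathcal{Z}(G))$ elements.
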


In \cite[Lemma 3.4]{M21}, Moret\'{o} proved that $\delta(A) = d(A)$ for any abelian group $A$. In \cite{KKS23}, Kaur et al. provided examples of the representation dimension of numerous groups and computed the representation dimension for various classes of non-abelian $p$-groups, such as $p$-groups of order $p^n$, where $n \leq 5$, normally monomial $p$-groups with cyclic center, and others.  In \cite{M99}, Mann proved the following result which will be used in the subsequent sections.

\begin{lemma} \textnormal{\cite[Proposition 3]{M99}} \label{lemma:MannNormallyMonomial}
    Let $G$ be a normally monomial group and $\chi$ be a faithful irreducible character of $G$. If $\chi$ is induced from a linear character of some normal subgroup $N$ of $G$, then $N$ must be abelian. Further, all such normal subgroups $N$ have maximum order among all the abelian subgroups of $G$, and $\chi(1) = \max \cd(G)$.
\end{lemma}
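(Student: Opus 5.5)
The plan is to write $\chi=\lambda\ind_{N}^{G}$ with $N\trianglelefteq G$ and $\lambda\in\lin(N)$, and to get all three assertions from Clifford/Mackey theory for the normal subgroup $N$. These are abelianness of $N$, the maximality of $|N|$ among abelian subgroups, and $\chi(1)=\max\cd(G)$. The character $\chi$ is faithful and irreducible. By Mackey's criterion, the $G$-conjugates $\lambda^{g}$ with $g\in G/N$ are pairwise distinct, so the inertia group of $\lambda$ in $G$ is exactly $N$. Moreover
\[
\chi_{N}=\sum_{gN\in G/N}\lambda^{g}
\]
is a multiplicity-free sum of $|G:N|$ distinct linear characters of $N$.

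\emph{Step 1: $N$ is abelian.} We have $\ker\chi=\Core_G(\ker\lambda)=\bigcap_{g}\ker(\lambda^{g})$. The subgroup $N'$ is characteristic in $N$, which is normal in $G$, so $N'\trianglelefteq G$. Since $\lambda$ is linear, $N'\subseteq\ker\lambda$, and hence $N'\subseteq\ker(\lambda^{g})$ for every $g$. Therefore $N'\subseteq\ker\chi=1$, so $N$ is abelian.

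\emph{Step 2: $|A|\le|N|$ for every abelian subgroup $A\le G$.} This is the step needing an idea. Let $A$ act by conjugation on the set $\{\lambda^{g}\}$ of constituents of $\chi_N$. The stabilizer in $G$ of each $\lambda^{g}$ is $N$, because $N$ is normal and is the inertia group of $\lambda$. Hence the stabilizer in $A$ of each $\lambda^{g}$ is $A\cap N$, and every $A$-orbit has size $|A:A\cap N|=|AN:N|$. Grouping $\chi_N$ by $AN$-orbits and applying Clifford theory in $AN$, each irreducible constituent of $\chi_{AN}$ lies over an orbit of $\lambda^{g}$'s with inertia group $N$ in $AN$. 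So each such constituent has degree $|AN:N|$. On the other hand, $A$ is an abelian subgroup of $AN$, so every irreducible character of $AN$ has degree at most $|AN:A|$. This gives $|AN:N|\le|AN:A|$, that is, $|A|\le|N|$. Applied to any other normal subgroup $N_1$ from which $\chi$ is induced, this shows that all such subgroups have the common maximum order among abelian subgroups.

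\emph{Step 3: $\chi(1)=\max\cd(G)$.} First, $\chi(1)=|G:N|$. For any $\psi\in\Irr(G)$ and any abelian subgroup $A$, we have the standard bound $\psi(1)\le|G:A|$, which holds because every constituent of $\psi_A$ is linear and $\psi$ is a constituent of the induced character of one of them. Taking $A=N$ gives $\psi(1)\le|G:N|=\chi(1)$, which proves the last claim.

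I expect Step 2, the orbit-counting argument inside $AN$, to be the only delicate point. The key fact it rests on is that the stabilizer in $G$ of every conjugate $\lambda^{g}$ is exactly $N$.
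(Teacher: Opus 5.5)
Your proof is correct. The paper gives no proof of this lemma; it simply quotes Mann \cite[Proposition 3]{M99}. So there is no argument in the text to compare with, and your proposal stands as a self-contained, elementary replacement. The setup is sound: irreducibility of $\lambda\ind_{N}^{G}$ forces the inertia group of $\lambda$ to be $N$. Step 1 ($N'\trianglelefteq G$ and $N'\subseteq \Core_G(\ker\lambda)=\ker\chi=1$) and Step 3 (the bound $\psi(1)\le|G:A|$ for abelian $A$, applied with $A=N$) are both right.

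Two remarks on what your route shows and where it can be shortened.

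First, Step 2 is not the delicate point you expect. Since $\chi(1)=|G:N|$, the same bound you prove in Step 3, applied to $\psi=\chi$ and an arbitrary abelian subgroup $A\le G$, gives $|G:N|=\chi(1)\le|G:A|$, i.e.\ $|A|\le|N|$, in one line. Your orbit count inside $AN$ proves exactly this inequality, but for the irreducible character $\lambda\ind_{N}^{AN}$ of $AN$ (irreducible because the inertia group of $\lambda$ in $AN$ is $AN\cap N=N$) instead of for $\chi$ itself. It is valid but redundant. Moreover, the equality of orders among all admissible $N$ is automatic, since each has order $|G|/\chi(1)$.

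Second, your argument never uses that $G$ is normally monomial. It only needs $\chi$ to be induced from a linear character of some normal subgroup, and faithfulness enters only in Step 1 to make $N$ abelian. In the cited setting, normal monomiality serves merely to guarantee that such an $N$ exists. Your proof makes clear that the three conclusions hold for any group $G$ and any faithful irreducible character of this form.
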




Now, Lemma \ref{lemma:deltaHtimesA} provides an upper bound on the representation dimension of the direct product of two groups.
\begin{lemma} \label{lemma:deltaHtimesA}
    Let $H$ and $K$ be groups. Then
    \[ \delta(H \times K) \leq \delta(H) + \delta(K). \]
\end{lemma}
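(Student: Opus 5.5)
The inequality follows from an explicit construction, so the plan is simply to build a faithful representation of $H\times K$ of dimension $\delta(H)+\delta(K)$. First choose faithful representations $\rho_H\colon H\to \GL_{m}(\mathbb{C})$ with $m=\delta(H)$ and $\rho_K\colon K\to \GL_{n}(\mathbb{C})$ with $n=\delta(K)$. These exist because the regular representation is faithful and $\delta$ is a minimum over a nonempty set. Compose each with the projection of $H\times K$ onto the corresponding factor; this gives representations $\pi_H=\rho_H\circ \mathrm{pr}_H$ and $\pi_K=\rho_K\circ \mathrm{pr}_K$ of $H\times K$.

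Next, form the direct sum
\[ \rho = \pi_H\oplus\pi_K, \qquad \rho(h,k)=\begin{pmatrix}\rho_H(h) & 0\\ 0 & \rho_K(k)\end{pmatrix}, \]
which is a representation of $H\times K$ of dimension $m+n$. To see that it is faithful, suppose $\rho(h,k)$ is the identity. Then $\rho_H(h)=I_m$ and $\rho_K(k)=I_n$, so faithfulness of $\rho_H$ and $\rho_K$ gives $h=1$ and $k=1$. Hence $\ker\rho$ is trivial.

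In character language, if $\chi$ and $\eta$ are faithful characters of least degree of $H$ and $K$, then $\chi\times 1_K+1_H\times\eta$ has kernel $(\ker\chi\times K)\cap(H\times\ker\eta)=1$. Either way, $\delta(H\times K)\le m+n$. No step is a genuine obstacle; the only point needing care is checking that the kernel of the direct sum is the intersection of the kernels of the two inflated representations.
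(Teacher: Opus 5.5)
Your proposal is correct and is essentially the paper's proof: both take the sum $\chi 1_K + 1_H\eta$ of the inflations of faithful characters of least degree, and observe that its kernel is the intersection of the two kernels, which is trivial. Your identification of those kernels as $\ker\chi\times K$ and $H\times\ker\eta$ is more accurate than the paper's remark that the inflated characters are themselves faithful on $H\times K$, which fails unless the other factor is trivial.
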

\begin{proof}
Let $\chi_1$ and $\chi_2$ be faithful characters of $H$ and $K$ respectively satisfying $\chi(1) = \delta(H)$ and $\chi_2(1) = \delta(K)$. Then $\chi_1 1_K$ and $1_H \chi_2$ are faithful characters of $H \times K$. Let $\chi = \chi_1 1_K + 1_H \chi_2$. Since $\ker(\chi) = \ker(\chi_1 1_K) \cap \ker(1_H \chi_2)$, $\chi$ is faithful as well. Therefore,
\[ \delta(H \times K) \leq \chi(1) = \chi_1(1) + \chi_2(1) = \delta(H) + \delta(K), \]
which completes the proof.
\end{proof}

In view of Lemma \ref{lemma:deltaHtimesA}, it is natural to investigate the relation between $\delta(H\times K)$ and $ \delta(H) + \delta(K)$ for various classes of groups. In Section \ref{sec:directproduct}, we prove that equality holds, for certain classes of groups, in the statement of Lemma \ref{lemma:deltaHtimesA}. 

\section{Representation dimension of the direct product of groups} \label{sec:directproduct}

In this section, we explore the representation dimension of the direct product of two groups. We study the irreducible constituents of a faithful character of least degree of $H\times K$, where $H$ and $K$ are normally monomial $p$-groups with cyclic centers. Further, we prove that the representation dimension of the direct product of $p$-groups 
coincides with the sum of the representation dimension of their underlying groups. However, the same is not true for nilpotent groups and we provide several counterexamples to support our claim. We begin with the following lemma.

\begin{lemma} \label{lemma:HtimesK}
    If $H$ is a normally monomial $p$-group with cyclic center and  $\varphi$ is an irreducible character of $H$ whose degree is strictly less than the maximum degree of all the irreducible characters of $H$, then it must kill all central $p$-elements of $H$.
\end{lemma}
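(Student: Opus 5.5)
The plan is to reduce the statement to Lemma~\ref{lemma:MannNormallyMonomial} by showing that an irreducible character which fails to kill the central elements of order $p$ must be faithful. Since $H$ is a $p$-group, every central element is a $p$-element. The content of the claim is therefore that $\ker(\varphi)$ contains the unique subgroup $Z_1$ of order $p$ of the cyclic group $\mathcal{Z}(H)$. Once $Z_1 \le \ker(\varphi)$, every element of order $p$ in $\mathcal{Z}(H)$ lies in $\ker(\varphi)$.

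First we dispose of the trivial case. If $H$ is abelian, then $\max\cd(H)=1$, so no $\varphi$ of strictly smaller degree exists and there is nothing to prove. Hence we may assume $H$ is non-abelian, though we do not actually need this.

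The key step uses the standard fact that in a $p$-group every nontrivial normal subgroup meets the center nontrivially. Apply this to the normal subgroup $N=\ker(\varphi)$. If $N\neq 1$, then $N\cap\mathcal{Z}(H)$ is a nontrivial subgroup of the cyclic $p$-group $\mathcal{Z}(H)$. Every nontrivial subgroup of a cyclic $p$-group contains its unique subgroup of order $p$, so $Z_1\le N$, which is exactly the assertion. So it suffices to rule out $\ker(\varphi)=1$.

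Suppose instead that $\varphi$ is faithful. Since $H$ is normally monomial, $\varphi$ is induced from a linear character of some normal subgroup of $H$. Lemma~\ref{lemma:MannNormallyMonomial} then gives $\varphi(1)=\max\cd(H)$, contradicting the hypothesis $\varphi(1)<\max\cd(H)$. Hence $\ker(\varphi)\neq 1$, and the previous paragraph finishes the proof.

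I expect no serious obstacle. The only point needing care is the reading of ``kills all central $p$-elements'' as containment of $Z_1$ in the kernel, that is, killing all central elements of order $p$. It is not literally killing all of $\mathcal{Z}(H)$, which would be false in general.
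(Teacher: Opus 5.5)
Your proof is correct, but it is organized differently from the paper's.

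The paper works directly with the inducing subgroup. It writes $\varphi=\lambda\ind_{M}^{H}$ with $M\trianglelefteq H$ and uses $\varphi(1)<p^b$ to get $|M|>|H|/p^b$. It then invokes the clause of Lemma~\ref{lemma:MannNormallyMonomial} saying that $|H|/p^b$ is the largest order of an abelian subgroup. That clause tacitly requires $H$ to have some faithful irreducible character, which holds because $\mathcal{Z}(H)$ is cyclic. From this the paper concludes that $M$ is non-abelian. Finally it exhibits an explicit nontrivial subgroup $M'\cap\mathcal{Z}(H)\le\ker(\lambda)$, which therefore lies in $\Core_{H}(\ker(\lambda))=\ker(\varphi)$.

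You instead argue by cases on faithfulness. If $\varphi$ is not faithful, its kernel is a nontrivial normal subgroup, so it meets the cyclic center and contains the subgroup of order $p$. If $\varphi$ is faithful, you apply the degree clause $\chi(1)=\max\cd(G)$ of Lemma~\ref{lemma:MannNormallyMonomial} to $\varphi$ itself, which contradicts $\varphi(1)<\max\cd(H)$.

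Your route is shorter. It needs neither the abelian-subgroup bound nor the existence of a separate faithful irreducible character of $H$. The paper's route has a different advantage: it pinpoints a concrete piece of the kernel, $M'\cap\mathcal{Z}(H)$, coming from the inducing subgroup. That same mechanism is reused later in the proofs of Theorems~\ref{thm:HtimesA1} and~\ref{thm:HtimesA2}.
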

\begin{proof}
    Let $p^b = \max \cd(H)$. Since $H$ is normally monomial, there exists $M \trianglelefteq H$ such that $\varphi = \lambda\ind_{M}^{H}$, for some $\lambda\in \lin(M)$. Now, $\varphi(1)< p^b$ implies that $|H/M| < p^b$, therefore $|M| > |H|/p^b$. By Lemma \ref{lemma:MannNormallyMonomial}, the maximum order of an abelian subgroup of $H$ is $|H|/p^b$ which implies that $M$ is a non-abelian subgroup of $H$. Since $M'$ is a characteristic subgroup of $M$, and $M\trianglelefteq H$, we get $M' \trianglelefteq H$. Since $H$ is a $p$-group and $M' \trianglelefteq H$, therefore $1 \neq \mathcal{Z}(H) \cap M' \subset \ker(\lambda)$. Consequently, $\mathcal{Z}(H) \cap M' \subset \Core_{H} \left( \ker(\lambda) \right) = \ker(\varphi)$. Since $\mathcal{Z}(H)$ is cyclic, all the central elements of order $p$ are contained in $\ker(\varphi)$.
\end{proof}
Now we prove Theorem \ref{thm:HtimesK} with the help of Lemma \ref{lemma:deltaHtimesA} and \ref{lemma:HtimesK}.\\

\noindent{\bf Proof of Theorem \ref{thm:HtimesK}.}
    Let $\max\cd(H)$ and $\max\cd(K)$ be $p^b$ and $p^m$, respectively. Suppose $G = H\times K$. Then $\max \cd(G) = p^{b+m}$. 
    By Lemma \ref{lemma:MannNormallyMonomial} and \ref{lemma:deltaHtimesA}, we get 
    \begin{equation} \label{eq:HtimesK}
        \delta(G) \leq \delta(H) + \delta(K) = p^b + p^m.
    \end{equation}
    Here $\mathcal{Z}(G) = \mathcal{Z}(H) \times \mathcal{Z}(K)$, and thus, $d(\mathcal{Z}(G)) = 2$. Suppose $\mathcal{X} \subset \Irr(G)$ affording $\delta(G)$. By Lemma \ref{lemma:Bardestanietal}, $|\mathcal{X}| = 2$, and let us suppose that $\mathcal{X} = \{ \chi, \eta \}$. First, we list all possibilities for the degrees of $\chi$ and $\eta$, and then we eliminate all those possibilities that contradict Equation \eqref{eq:SetXProperty}.
    The possibilities for $\chi(1)$ and $\eta(1)$ are the following:
    \begin{enumerate}
        \item [(i)] $\chi(1) > p^b$ and $\eta(1) > p^m$;
        \item [(ii)] $\chi(1) > p^b$ and $\eta(1) = p^m$;
        \item [(iii)] $\chi(1) = p^b$ and $\eta(1) > p^m$;
        \item [(iv)] $\chi(1) = p^b$ and $\eta(1) < p^m$;
        \item [(v)] $\chi(1) < p^b$ and $\eta(1) < p^m$;
        \item [(vi)] $\chi(1) < p^b$ and $\eta(1) = p^m$;
        \item [(vii)] $\chi(1) < p^b$ and $\eta(1) > p^m$;
        \item [(viii)] $\chi(1) > p^b$ and $\eta(1) < p^m$;
        \item [(ix)] $\chi(1) = p^b$ and $\eta(1) = p^m$.
    \end{enumerate}
     \noindent {\bf Cases (i), (ii), and (iii):} Cases (i), (ii), and (iii) imply that $\delta(G) = \chi(1) + \eta(1) > p^b + p^m$, which contradicts Equation \eqref{eq:HtimesK}. Thus, Cases (i), (ii), and (iii) are not possible.\\
     Before we move further, we need a few more notations. Let $\chi = \chi_1 \chi_2$ and $\eta = \eta_1 \eta_2$, for some $\chi_1, \eta_1 \in \Irr(H)$ and $\chi_2, \eta_2 \in \Irr(K)$. Since $H$ and $K$ are normally monomial $p$-groups, there exist $M_1, N_1 \trianglelefteq H$ and $M_2, N_2 \trianglelefteq K$ such that 
     \[ \chi_{1} = \lambda_{1}\ind_{M_1}^{H},~ \eta_{1} = \psi_{1}\ind_{N_1}^{H},~ \chi_{2} = \lambda_{2}\ind_{M_2}^{K}, \text{ and } \eta_{2} = \psi_{2}\ind_{N_2}^{K}, \]
     for some $\lambda_i \in \lin(M_i)$ and some $\psi_i \in \lin(N_i),$ where $i=1,2$.\\
     


     
     \noindent {\bf Case (iv):} Here, we have $\chi(1) = p^b$ and $\eta(1) < p^m$. Now, $p^m > \eta(1) = \eta_1(1) \eta_2(1)$, and thus we obtain $\eta_2(1) < p^m$. Then, from Lemma \ref{lemma:HtimesK}, all the elements of order $p$ of $\mathcal{Z}(K)$ belong to $\ker(\eta_2) \subset \ker(\eta)$.
     Now, we obtain the implications of $\chi(1) = p^b$. \\
     First, consider the case when $b< m$. Here, $\chi_1(1)\chi_2(1) = \chi(1) = p^b < p^m \Rightarrow \chi_2(1) < p^m$. Then, again from Lemma \ref{lemma:HtimesK}, all the elements of order $p$ of $\mathcal{Z}(K)$ belong to $\ker(\chi_2) \subset \ker(\chi)$. Therefore, $\ker(\chi) \cap \ker(\eta) \neq 1$, which is a contradiction.\\
     Now, we consider the case when $b \geq m$. There are two possibilities:
     \begin{enumerate}
         \item [(a)]  $\chi_1(1) = p^b$ and $\chi_2(1) = 1$: In this case, $\chi_2 \in \lin(K)$. Then, $1\neq \mathcal{Z}(K) \cap K' \subseteq K' \subset \ker(\chi)$, which implies that $\ker(\chi) \cap \ker(\eta) \neq 1$, a contradiction.
         \item [(b)] $\chi_1(1) = p^{b-m}$ and $\chi_2(1) = p^m$, where $1\leq m\leq b$: This case is only possible if $p^{b-m} \in \cd(H)$. In this case, $\chi_1(1) < p^b$. Then, from Lemma \ref{lemma:HtimesK}, all the elements of order $p$ of $\mathcal{Z}(H)$ belong to $\ker(\chi_1) \subset \ker(\chi)$.
 Further, $\eta_{1} < p^{b}$ since $\eta_1(1) \eta_2(1) = \eta(1) < p^m \leq p^b$. Again,  from Lemma \ref{lemma:HtimesK}, all the elements of order $p$ of $\mathcal{Z}(H)$ belong to $\ker(\eta_1) \subset \ker(\eta)$. Since $\mathcal{Z}(H)$ is cyclic, we get $\ker(\chi) \cap \ker(\eta) \neq 1$, which is a contradiction.
     \end{enumerate}
     Therefore, Case (iv) is not possible.\\
     
     \noindent {\bf Case (v,vi):} One can verify that Case (v) and Case (vi) are not possible, based on the similar arguments as provided in Case (iv).\\ 

     \noindent {\bf Case (vii):} Here, we have $\chi(1) < p^b$ and $\eta(1) > p^m$. First, if $b< m$, then $\eta(1) > p^m$ implies that $\eta(1) \geq p^{m+a_1}$ for some $a_1 \geq 1$. Since $\chi(1) \geq 1$, we get $\chi(1) + \eta(1) \geq p^{m+a_1} + 1$. Suppose, on the contrary, that $p^{m+a_1} + 1 \leq p^b + p^m$. This implies that
     \[ p^b + p^m - p^{m+a_1} \geq 1 \Rightarrow p^b[ 1 + p^{m-b}(1 - p^{a_1}) ] \geq 1, \]
     which is not possible since $1-p^{a_1} \leq -1$. Therefore, $\chi(1) + \eta(1) \geq p^{m+a_1} + 1 > p^b + p^m$, which is a contradiction from Equation \eqref{eq:HtimesK}.\\
     Now, let us consider the case when $b>m$. Let $b = j + m$ for some natural number $j>0$. We have the following possibilities:
     \begin{enumerate}
         \item [(a)] $\chi(1) < p^b$ and $p^m < \eta(1) < p^b$:  In this case, we have $\chi_1(1) < p^b$ and $\eta_1(1) < p^b$. From Lemma \ref{lemma:HtimesK}, all the elements of order $p$ of $\mathcal{Z}(H)$ belong to both $\ker(\chi)$ and $\ker(\eta)$. Since $\mathcal{Z}(H)$ is cyclic, we get $\ker(\chi) \cap \ker(\eta) \neq 1$, a contradiction.
         \item [(b)] $\chi(1) < p^b$ and $\eta(1) =p^b$: In this case, we have three further possibilities: either $p^m < \chi(1) < p^b$, or $\chi(1) = p^m$, or $\chi(1) < p^m$. If $p^m < \chi(1) < p^b$ and $\eta(1) = p^b$, then $\chi(1) + \eta(1) > p^b + p^m$, a contradiction by Equation \eqref{eq:HtimesK}. \\
         Further, if $\chi(1) = p^m$ and $\eta(1) = p^b$, then the case is the same as Case (ix).\\
         Otherwise, we have $\chi(1) < p^m$ and $\eta(1) = p^b$. Then, the proof follows on similar lines as Case (iv), and we have proved above that Case (iv) is not possible.
         \item [(c)] $\chi(1) < p^b$ and $\eta(1) > p^b$: In this case, $\eta(1) \geq p^{m+j+1}$. Since $\chi(1) \geq 1$, we get $\chi(1) + \eta(1) \geq p^{m+j+1} + 1$. Note that $p^{m+j+1} + 1 \leq p^b + p^m$ implies that 
     \[ p^b + p^m - p^{m+j+1} \geq 1 \Rightarrow p^m[ 1 - p^{j}(p - 1) ] \geq 1, \]
     which is not possible since $1-p^{j}(p-1) \leq -1$. Therefore, $\chi(1) + \eta(1) \geq p^{m+j+1} + 1 > p^b + p^m$, which is a contradiction from Equation \eqref{eq:HtimesK}.
     \end{enumerate}
     Finally, if $b = m$, then we have $\chi(1) < p^b$ and $\eta(1) > p^b$. Then, $\eta(1) > p^b$ implies that $\eta(1) \geq p^{b+a_2}$ for some $a_2 \geq 1$. Hence, $\chi(1) + \eta(1) \geq p^{b+a_2} + 1$. It is easy to see that $p^{b+a_2} + 1 > 2p^b$. Hence, $\chi(1) + \eta(1) > 2p^b$, which is a contradiction from Equation \eqref{eq:HtimesK}.\\
     Therefore, Case (vii) is not possible, except when $b>m$, $\chi(1) = p^m$, and $\eta(1) = p^b$, which is a sub-case of Case (ix).\\

     \noindent {\bf Case (viii):} Here, we have $\chi(1) > p^b$ and $\eta(1) < p^m$. First, we consider the case when $b> m$. Let $b = m + j$ for some natural number $j>0$. Then, $\chi(1) \geq p^{m+j+1}$ and $\eta(1) \geq 1$, so that we get $\chi(1) + \eta(1) \geq p^{m+j+1} + 1$. Then, similar to the sub-case (c) in Case (vii), we get $\chi(1) + \eta(1) > p^b + p^m$, which is a contradiction from Equation \eqref{eq:HtimesK}.\\
     Further, if $b=m$, the case is the same as the sub-case $b = m$ in Case (vii).\\
     Otherwise, if $b<m$, then $\chi(1) \geq p^b$ implies that $\chi(1) \geq p^m$. Then, we have the following possibilities:
     \begin{enumerate}
         \item [(a)] $\chi(1) = p^m$ and $\eta(1) = p^b$: This case is the same as Case (ix).
         \item [(b)] $\chi(1) > p^m$ and $\eta(1) = p^b$: This case is a sub-case of Case (iii), and we have proved above that Case (iii) is not possible.
         \item [(c)] $\chi(1) = p^m$ and $\eta(1) < p^b$: This case is a sub-case of Case (vi), and we have proved above that Case (vi) is not possible.
         \item [(d)] $\chi(1) > p^m$ and $\eta(1) < p^b$: This case is a sub-case of Case (vii), and we have proved above that Case (vii) is not possible.
     \end{enumerate}
     Therefore, Case (viii) is not possible, except when $b<m$, $\chi(1) = p^m$, and $\eta(1) = p^b$, which is a sub-case of Case (ix).\\

     \noindent We are now left with only Case (ix), that is, $\chi(1) = p^b$ and $\eta(1) = p^m$. We summarize our discussion in the following table: 
     
     \begin{longtable}[c]{| l| c |c|c|c|}
	\caption*{ \label{t}}\\

	\hline
	Case &  $\chi(1)$ & $\eta(1)$ & Outcome & Reason \\ 
	\hline
	\endfirsthead
	
	\hline
	\multicolumn{2}{|c|}{Continuation of Table \ref{t:p34}}\\
	\hline
	Case &  $\chi(1)$ & $\eta(1)$ & Outcome & Reason \\  
	\hline
	\endhead
	
	\hline
	\endfoot

	\endlastfoot
	\hline
	
	(i), (ii), (iii) &  $>p^{b}$ & $>p^{m}$ & Contradiction & Violates upper bound (Equation \eqref{eq:HtimesK}) \\
    (iv) &  $=p^{b}$ & $<p^{m}$ & Contradiction &   Forced share central $p$-element in kernels \\ 
    (v), (vi) &  $<p^{b}$ & $\leq p^{m}$ & Contradiction & Forced share central $p$-element in kernels \\	(vii), (viii) &  $<p^{b}$ & $>p^{m}$ & Contradiction & Violates upper bound (Equation \eqref{eq:HtimesK}) \\ (ix) &  $=p^{b}$ & $=p^{m}$ & Feasible case &  \\

		\hline

\end{longtable} This concludes the proof.\qed
\\

Theorem \ref{thm:HtimesK} implies that for normally monomial $p$-groups $H$ and $K$ with cyclic center, we get $\delta(H\times K) = \delta(H) + \delta(K)$. In the next theorem, we prove that for any $p$-groups $H$ and $K$, $\delta(H\times K) = \delta(H) + \delta(K)$. The proof utilizes the techniques of \cite[Theorem 2]{W75}.
\begin{theorem} \label{thm:HtimesKpgroup}
    Let $H$ and $K$ be $p$-groups, for a fixed prime $p$. Then, $\delta(H\times K) = \delta(H) + \delta(K)$.
\end{theorem}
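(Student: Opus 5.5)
The plan is to reduce both sides to a linear-algebra statement over $\mathbb{F}_p$ about the socle $\Omega_1(\mathcal{Z}(G))$, and then split a basis of a direct sum using a Laplace expansion of a determinant. The inequality $\delta(H\times K)\le \delta(H)+\delta(K)$ is Lemma \ref{lemma:deltaHtimesA}, so only the reverse inequality has to be proved.

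\textbf{Step 1 (faithfulness criterion).} For a $p$-group $G$, write $V_G=\Omega_1(\mathcal{Z}(G))$. This is an elementary abelian group, i.e. an $\mathbb{F}_p$-vector space of dimension $d(\mathcal{Z}(G))$. Every nontrivial normal subgroup of $G$ meets $\mathcal{Z}(G)$ nontrivially, hence meets $V_G$ nontrivially. For $\chi\in\Irr(G)$, Schur's lemma gives $\chi|_{V_G}=\chi(1)\,\omega_\chi$ for a linear character $\omega_\chi\in V_G^{*}$. So $\ker(\chi)\cap V_G=\ker(\omega_\chi)$. Hence a family $\chi_1,\dots,\chi_n$ of irreducibles has $\bigcap\ker(\chi_i)=1$ if and only if $\bigcap \ker(\omega_{\chi_i})=1$. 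By duality, this holds if and only if the $\omega_{\chi_i}$ span $V_G^{*}$. Combining this with Lemma \ref{lemma:Bardestanietal} (or simply discarding redundant characters), $\delta(G)$ is the minimum of $\sum\chi_i(1)$ over families whose central characters $\omega_{\chi_i}$ form a basis of $V_G^{*}$. The abelian case is included, with linear characters.

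\textbf{Step 2 (the product).} Let $G=H\times K$. Then $V_G=V_H\times V_K$ and $V_G^{*}=V_H^{*}\oplus V_K^{*}$, with $r=\dim V_H$ and $s=\dim V_K$. An irreducible character of $G$ has the form $\alpha\beta$ with $\alpha\in\Irr(H)$, $\beta\in\Irr(K)$. Its central character is $(\omega_\alpha,\omega_\beta)$, and its degree is $\alpha(1)\beta(1)\ge\max(\alpha(1),\beta(1))$.

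Take a family $\alpha_i\beta_i$, $1\le i\le r+s$, affording $\delta(G)$. By Step 1, the vectors $(\omega_{\alpha_i},\omega_{\beta_i})$ form a basis of $V_H^{*}\oplus V_K^{*}$. Put them as the rows of an invertible $(r+s)\times(r+s)$ matrix $[A\,|\,B]$ over $\mathbb{F}_p$, where $A$ has $r$ columns. The generalized Laplace expansion along the first $r$ columns writes $\det[A\,|\,B]$ as a signed sum over $r$-subsets $R$ of rows of $\det A_R\cdot\det B_{T}$, with $T$ the complementary rows. Since the determinant is nonzero, some term is nonzero. This yields a partition $R\sqcup T$ of the indices such that $\{\omega_{\alpha_i}\}_{i\in R}$ is a basis of $V_H^{*}$ and $\{\omega_{\beta_i}\}_{i\in T}$ is a basis of $V_K^{*}$.

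\textbf{Step 3 (conclusion).} By Step 1 applied to $H$, the characters $\{\alpha_i\}_{i\in R}$ form a faithful family for $H$, so $\sum_{i\in R}\alpha_i(1)\ge\delta(H)$. Likewise $\sum_{i\in T}\beta_i(1)\ge\delta(K)$. Therefore
\[
\delta(G)=\sum_i \alpha_i(1)\beta_i(1)\ \ge\ \sum_{i\in R}\alpha_i(1)+\sum_{i\in T}\beta_i(1)\ \ge\ \delta(H)+\delta(K).
\]

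The only genuinely non-routine point is the splitting in Step 2. A naive choice, namely $s$ vectors whose $V_K^{*}$-parts form a basis, need not leave $r$ vectors whose $V_H^{*}$-parts span $V_H^{*}$; the Laplace expansion (equivalently, a matroid-partition argument) is what guarantees a simultaneous split. Step 1 also needs care that the criterion uses only $\Omega_1(\mathcal{Z}(G))$, which is where the $p$-group hypothesis enters. For nilpotent groups with different primes, extra coprime central parts could be absorbed by a single irreducible, consistent with the paper's counterexamples.
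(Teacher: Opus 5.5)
Your proof is correct and has the same skeleton as the paper's: pass to $\Omega_1(\mathcal{Z}(G))$, split the $m+n$ constituents of a minimal faithful character into $m$ that are jointly faithful on $\mathcal{Z}(H)$ and $n$ that are jointly faithful on $\mathcal{Z}(K)$, then add degrees. You work dually with central characters rather than with the paper's elements $t_j$ (obtained via Wright's lemma), and your generalized Laplace expansion actually proves the simultaneous split, which is the non-trivial step and which the paper only asserts (``we can rearrange $\ker(\chi_i)$ so that \dots'') on the basis of a dimension count.
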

\begin{proof}
    Let $d(\mathcal{Z}(H)) = m$, $d(\mathcal{Z}(K)) = n$, and let $G = H\times K$. Then, $d(\mathcal{Z}(G)) = m+n$. Let $\chi$ be a faithful character of $G$ of least degree. From Lemma \ref{lemma:Bardestanietal}, $\chi$ decomposes as a sum of $m+n$ distinct irreducible characters of $G$, say, $\chi_{1}, \chi_{2}, \cdots, \chi_{m+n}.$
   Then, 
    \[ \bigcap_{i=1}^{m+n} \ker(\chi_i) = 1 \text{ and } \bigcap_{i=1, i\neq j}^{m+n} \ker(\chi_i) \neq 1 \text{ for each } 1\leq j \leq m+n. \]
    Let $T_j = \cap_{i=1, i\neq j}^{m+n} \ker(\chi_i)$ for each $1\leq j \leq m+n$. By \cite[Lemma, p. 899]{W75}, $1\neq T_j \cap \mathcal{Z}(G)$ is cyclic for each $j$. Let $\Omega_{1}(\mathcal{Z}(G))$ denote the subgroup generated by the elements of $\mathcal{Z}(G)$ of order $p$. Note that the total number of elements of order $p$ in an abelian group $A$ of rank $k$ equals $\sum_{r=0}^{k-1} \binom{k}{r}(p-1)^{k-r}$, i.e. the number depends on the rank of $A$. Hence, the rank of $\Omega_{1}(\mathcal{Z}(G))$ must be equal to the rank of $\mathcal{Z}(G)$, i.e. \[ \Omega_{1}(\mathcal{Z}(G)) \cong C_{p}^{m+n}. \] 
    Then, $T_j \cap \Omega_{1}(\mathcal{Z}(G))$ is also a nontrivial cyclic group for each $1\leq j \leq m+n$. Suppose, for each $j$, $T_j \cap \Omega_{1}(\mathcal{Z}(G)) = \langle t_j \rangle$ for some $t_j \in \Omega_{1}(\mathcal{Z}(G))$. 
    Then, 
    $t_i \in \ker(\chi_j)$ for each $i\neq j$, where $1\leq i,j \leq m+n$. Clearly,
    \begin{equation} \label{eq:Sj}
        S_j = \langle t_i : 1\leq i \leq m+n, ~i\neq j \rangle \subseteq \ker(\chi_j) \cap \Omega_{1}(\mathcal{Z}(G)).
    \end{equation}
    \noindent {\bf Claim 1:} $S_j \cong C_{p}^{m+n-1}$ for each $1\leq j \leq m+n$.\\
    Since $t_j \in \Omega_{1}(\mathcal{Z}(G))$, the order of $t_j$ is $p$, for each $j$. To prove the claim, we only need to show that $t_j \notin S_j$ for each $j$. This would imply that for each $j$, $t_j$ cannot be expressed as a product of $t_1, \ldots, t_{j-1}, t_{j+1}, \ldots, t_{m+n}$, and hence, $S_j$ would prove to be an elementary abelian group of order $p^{m+n-1}$. \\
    Suppose, on the contrary, that $t_k \in S_k$ for some $1\leq k \leq m+n$. Then,
    \[ \langle t_k \rangle = T_k \cap \Omega_{1}(\mathcal{Z}(G)) = \left( \bigcap_{i=1, i\neq k}^{m+n} \ker(\chi_i) \right) \cap \Omega_{1}(\mathcal{Z}(G)). \]
    On the other hand, $S_k \subseteq \ker(\chi_{k}) \cap \Omega_{1}(\mathcal{Z}(G))$ (from Equation \eqref{eq:Sj}). Since $t_k \in S_k$, we get $1\neq t_k \in \cap_{i=1}^{m+n}\ker(\chi_i)$, which is a contradiction. Hence $t_j \notin S_j$ for each $j$, and thus, Claim 1 is proved.\\
    Now, if $\ker(\chi_{l}) \cap \Omega_{1}(\mathcal{Z}(G))$ properly contains $S_l$ for some $l$, then $\ker(\chi_{l}) \cap \Omega_{1}(\mathcal{Z}(G))$ must be isomorphic to the elementary abelian group of rank $m+n$. This implies that $\ker(\chi_{l}) \cap \Omega_{1}(\mathcal{Z}(G))$ must be equal to $\Omega_{1}(\mathcal{Z}(G))$, since $\Omega_{1}(\mathcal{Z}(G))\cong C_{p}^{m+n}$. Hence, $\ker(\chi_l) = \Omega_{1}(\mathcal{Z}(G))$, which implies that $\cap_{i=1}^{m+n}\ker(\chi_i) \neq 1$, a contradiction. Therefore,
   \begin{equation} \label{eq:pgroupHtimeK}
       \ker(\chi_j) \cap \Omega_{1}(\mathcal{Z}(G)) = S_j = \langle t_i : 1\leq i \leq m+n, ~i\neq j \rangle, \text{ for each } j.
   \end{equation}
   Note that $\langle t_i : 1\leq i \leq m+n \rangle \cong C_{p}^{m+n}$. Since $\langle t_i : 1\leq i \leq m+n \rangle$ is a subgroup of $\Omega_{1}(\mathcal{Z}(G))$, and $\Omega_{1}(\mathcal{Z}(G)) \cong C_{p}^{m+n}$, we must have 
   \[ \Omega_{1}(\mathcal{Z}(G))  = \langle t_i : 1\leq i \leq m+n \rangle. \]
   Since $\Omega_{1}(\mathcal{Z}(G)) = \Omega_{1}(\mathcal{Z}(H)) \times \Omega_{1}(\mathcal{Z}(K))$,  we can rearrange $\ker(\chi_i)$ so that 
    \[ \langle t_i : m+1\leq i \leq m+n \rangle \cap \Omega_{1}(\mathcal{Z}(H)) = 1, \text{ and } \langle t_i : 1\leq i \leq m \rangle \cap \Omega_{1}(\mathcal{Z}(K)) = 1, \]
    owing to the fact that $d(\Omega_{1}(\mathcal{Z}(H))) = m$ and $d(\Omega_{1}(\mathcal{Z}(K))) = n$.
    From Equation \eqref{eq:pgroupHtimeK}, 
    \[ \bigcap_{i=1}^{m} \ker(\chi_i) \cap \Omega_{1}(\mathcal{Z}(G)) = \langle t_{i} : m+1\leq i \leq m+n \rangle, \text{ and } \bigcap_{i=m+1}^{m+n} \ker(\chi_i) \cap \Omega_{1}(\mathcal{Z}(G)) = \langle t_{i} : 1\leq i \leq m \rangle. \]
    Hence, we get
    \[ \bigcap_{i=1}^{m} \ker(\chi_i) \cap \Omega_{1}(\mathcal{Z}(G)) \cap \Omega_{1}(\mathcal{Z}(H)) = \langle t_{i} : m+1\leq i \leq m+n \rangle \cap \Omega_{1}(\mathcal{Z}(H)) = 1.  \]
    Therefore, 
    \[ \bigcap_{i=1}^{m} \ker(\chi_i) \cap \Omega_{1}(\mathcal{Z}(H)) = 1 \Rightarrow \bigcap_{i=1}^{m} \ker(\chi_i) \cap \mathcal{Z}(H) = 1. \]
    Similarly, we get 
    \[ \bigcap_{i=m+1}^{m+n} \ker(\chi_i) \cap \mathcal{Z}(K) = 1. \]
    Since $\chi_i$ (for $1\leq i \leq m$) and $\chi_i$ (for $m+1\leq i \leq m+n$), respectively, can be thought of as characters of $H$ and $K$, we get
     $\sum_{i=1}^{m} \chi_{i}(1) \geq \delta(H)$ and $\sum_{i=m+1}^{m+n} \chi_{i}(1) \geq \delta(K)$. Then,
    \[ \delta(G) = \chi(1) = \sum_{i=1}^{n} \chi_i(1) = \sum_{i=1}^{m} \chi_i(1) + \sum_{i=m+1}^{m+n} \chi_i(1) \geq \delta(H) + \delta(K). \]
    Therefore, from Lemma \ref{lemma:deltaHtimesA}, $\delta(G) = \delta(H) + \delta(K)$.
\end{proof}

As a consequence of Theorem \ref{thm:HtimesKpgroup}, if $G = H_1\times H_2 \times \cdots \times H_r$, where each $H_i$ is a $p$-group for some fixed prime $p$, then the representation dimension of $G$ is equal to the sum of the representation dimension of $H_i$'s. However, the same may not be true if $G$ is a nilpotent group. 
In fact, if $H$ and $K$ are non-abelian groups of order $p^n$ and $q^m$, respectively, where $p\neq q$, and center of any one of $H$ and $K$ is non-cyclic, then we may have $\delta(H\times K) \neq \delta(H)+\delta(K)$.


\begin{example} \label{example:nilpotentcounterexample1}
    \textnormal{Let \[ H = \langle x, y, z: x^{4} = y^2 = z^2 = 1, yx = xy, zxz^{-1} = xy, yz = zy \rangle, \] a non-abelian group of order $16$, and \[ K = \langle a, b: a^{9} = b^3 = 1, a^{-1}b^{-1}ab = b^3 \rangle, \] a non-abelian group of order $27$. Here $\mathcal{Z}(H) = \langle x^2, y \rangle \cong C_2 \times C_2$. By \cite[Theorem 3.1(2)]{KKS23}, $\delta(H) = 3$, and $\delta(K) = 3$. However, by using {\sc GAP} \cite{GAP}, we obtain $\delta(H\times K) = 5$. Hence $\delta(H\times K) < \delta(H) + \delta(K)$.}
\end{example}

Now, we prove that $\delta(H\times K) = \delta(H) + \delta(K)$ if $K$ is cyclic (not necessarily of prime-power order), and $H$ belongs to some class of non-abelian $p$-groups with cyclic center.

\begin{theorem} \label{thm:HtimesA1}
    Let $H$ be a non-abelian $p$-group with cyclic center and $K$ be a cyclic group. If $\cd(H) = \{ 1, p^a\}$, or $\{ 1, p, p^b\}$ $(a\geq 1, b > 1)$, then
    \[ \delta(H \times K) = \delta(H) + \delta(K) = \max \cd(H) + 1. \]
\end{theorem}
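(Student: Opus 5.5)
The plan is to prove the upper bound with Lemma \ref{lemma:deltaHtimesA} and the lower bound by analysing the irreducible constituents of a faithful character of least degree of $G=H\times K$. Throughout, $K$ is a nontrivial cyclic group, so $\delta(K)=d(K)=1$ by \cite[Lemma 3.4]{M21}. One caution before starting: if $p\nmid |K|$, the statement appears false. For example, $Q_8\times C_3$ embeds in $\GL_2(\mathbb{C})$ as $\omega Q_8$, so $\delta=2$ rather than $3$. I would therefore carry out the argument assuming $p$ divides $|K|$, and I expect the statement to need this hypothesis.

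\textbf{Step 1 (the value of $\delta(H)$).} First I would show $\delta(H)=\max\cd(H)=:p^e$, where $e=a$ or $e=b$. Since $\mathcal{Z}(H)$ is cyclic, Lemma \ref{lemma:Bardestanietal} gives that a faithful character of least degree is irreducible. It then suffices to show that no irreducible character of degree less than $p^e$ is faithful.
\begin{itemize}
\item Linear characters contain $H'$ in their kernel, and $H'\cap\mathcal{Z}(H)\neq 1$.
\item In the case $\cd(H)=\{1,p,p^b\}$, let $\varphi$ have degree $p$. Then $\varphi=\lambda\ind_M^H$ for some maximal (hence normal) subgroup $M$ and some $\lambda\in\lin(M)$. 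The subgroup $M$ is non-abelian, because an abelian subgroup of index $p$ forces every character degree to be at most $p$, contradicting $b>1$. So $1\neq M'\cap\mathcal{Z}(H)\subseteq\Core_H(\ker\lambda)=\ker\varphi$.
\end{itemize}
In every case, a character of degree less than $p^e$ kills the unique subgroup $\Omega$ of order $p$ in $\mathcal{Z}(H)$. This mirrors the argument of Lemma \ref{lemma:HtimesK}.

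\textbf{Step 2 (lower bound).} Let $\psi$ be a faithful character of $G$ of least degree. By Lemma \ref{lemma:deltaHtimesA} and Step 1, $\psi(1)\le p^e+1$. Restricting $\psi$ to $H\times 1$ gives a faithful character of $H$, so $\psi(1)\ge p^e$. It remains to rule out $\psi(1)=p^e$. Assume $\psi(1)=p^e$ and write $\psi=\sum\alpha_i\beta_i$ with $\alpha_i\in\Irr(H)$ and $\beta_i\in\lin(K)$.
\begin{itemize}
\item If some $\alpha_i$ has degree $p^e$, then $\psi=\alpha\beta$ is irreducible and faithful. Hence $\mathcal{Z}(G)=\mathcal{Z}(H)\times K$ is cyclic, which fails because both factors have nontrivial $p$-parts.
\item Otherwise every $\alpha_i$ has degree less than $p^e$. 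By Step 1, every constituent $\alpha_i\beta_i$ then contains $\Omega\times 1$ in its kernel, contradicting faithfulness.
\end{itemize}
Hence $\psi(1)=p^e+1=\delta(H)+\delta(K)$.

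\textbf{Main obstacle.} The delicate point is Step 1 in the $\{1,p,p^b\}$ case, namely the claim that degree-$p$ characters are not faithful. This rests on the claim that an abelian subgroup of index $p$ bounds all character degrees by $p$, which follows from Ito's theorem: character degrees divide the index of an abelian normal subgroup. The other subtle issue is the coprime case flagged above, where the irreducible character $\alpha\beta$ genuinely is faithful, so the argument cannot close without assuming $p\mid |K|$.
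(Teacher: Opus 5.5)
Your proof is correct, and your caveat is right: the theorem as stated fails whenever $p\nmid|K|$. Your example $Q_8\times C_3\cong\omega Q_8\le\GL_2(\mathbb{C})$ gives $\delta=2\neq 3$, so the hypothesis $p\mid|K|$ is genuinely needed.

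Your core argument is the paper's. Every irreducible character of $H$ of degree below $p^e=\max\cd(H)$ kills the order-$p$ subgroup of $\mathcal{Z}(H)$: linear ones through $H'$, and degree-$p$ ones through $M'\cap\mathcal{Z}(H)$ for a non-abelian maximal subgroup $M$. This forces a constituent of degree $p^e$, and Lemma~\ref{lemma:deltaHtimesA} then caps $\psi(1)$ at $p^e+1$.

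You differ at the final step, and there your version is the sound one. The paper writes $\psi(1)=a_1+a_2p^a\le 1+p^a$ with $a_2>0$ and concludes $a_1=a_2=1$. That inequality only yields $a_2=1$ and $a_1\in\{0,1\}$. The case $a_1=0$, where $\psi$ is a single faithful irreducible character of degree $p^e$, is never ruled out, and it is exactly what happens when $p\nmid|K|$. Your observation fills this gap: a faithful irreducible character would make $\mathcal{Z}(H\times K)=\mathcal{Z}(H)\times K$ cyclic, which is impossible once $p\mid|K|$.

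There are two smaller differences. You derive $\delta(H)=\max\cd(H)$ directly from Lemma~\ref{lemma:Bardestanietal} and Ito's theorem instead of citing \cite[Theorem 3.3]{KKS23}. You also pin $\psi(1)\in\{p^e,p^e+1\}$ by restricting to $H\times 1$, which is cleaner than the paper's coefficient bookkeeping.
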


\begin{proof} Suppose $G = H\times K$. Note that $\delta(K) = 1$, and from \cite[Theorem 3.3]{KKS23}, $\delta(H) = \max \cd(H)$. 
Let $\mathcal{X}\subset \Irr(G)$ affording $\delta(G)$, and $\psi = \sum_{\chi \in \mathcal{X}} \chi$ be a faithful character of $G$ of minimal degree. 
We have the following two cases.
\begin{enumerate}
    \item [(i)] $\cd(H) = \{ 1, p^a \}$ $(a \geq 1):$ Since $G'$ is contained in the kernel of each linear character of $G$, the intersection of the kernels of any collection of linear characters is always non-trivial. Hence, the set $\mathcal{X}$ must contain at least one nonlinear irreducible character of $G$. Since $\cd(H) = \{ 1, p^a \}$, we get $\psi(1) = a_1\cdot 1 + a_2 \cdot p^a$, for some $a_1 \geq 0$ and $a_2 > 0$. 
     However, from Lemma \ref{lemma:deltaHtimesA}, we get 
    \begin{equation*}
       a_1\cdot 1 + a_2 \cdot p^a = \psi(1) = \delta(G) \leq 1 + p^a, \text{ where } a_2>0,
    \end{equation*}
    which implies that $a_1 = 1$ and $a_2 = 1$.
    \item [(ii)] $\cd(H) = \{ 1, p, p^b \}$ $(b > 1):$ Suppose $\rho$ is an irreducible character of $H$ of degree $p$. Since $p$-groups are monomial, there exists a subgroup $M$ of index $p$ in $H$ such that $\rho = \eta\ind_{M}^{H}$, for some $\eta \in \lin(M)$. From \cite[Lemma 12.11]{IBook}, $M$ must be non-abelian. Since $M'$ is a normal subgroup of $H'$, and $H'$ is a characteristic subgroup of $H$, $1 \neq M'$ is a normal subgroup of $H$. Let $x\in \mathcal{Z}(H)$ be an element of order $p$. Since $\mathcal{Z}(H)$ is cyclic, we have $1\neq x\in \mathcal{Z(H)} \cap M' \subseteq M' \subseteq \ker(\eta)$, and thus, $x \in \ker(\rho)$. Hence, $x$ is contained in the kernel of every irreducible character of $H$ of degree $p$. Further, $x \in \mathcal{Z(H)} \cap H' \subseteq H' \subseteq G'$ is contained in the kernel of every linear character of $G$.
   Thus, in case all the  irreducible characters in $\mathcal{X}$ are of degree 1 or $p$,
   then $\cap_{\chi \in \mathcal{X}} \ker(\chi)$ cannot be trivial. Hence, 
   $\psi(1) = a_1\cdot 1 + a_2 \cdot p + a_3 \cdot p^b$, for some $a_1, a_2 \geq 0$ and $a_3 > 0$. 
     However, Lemma \ref{lemma:deltaHtimesA} yields  
    \begin{equation*}
       a_1\cdot 1 + a_2 \cdot p + a_3 \cdot p^b = \psi(1) = \delta(G) \leq 1 + p^a,
    \end{equation*}
    which implies that $a_1 = 1$, $a_2 = 0$ and $a_3 = 1$.
\end{enumerate}
This completes the proof.
\end{proof}

\begin{theorem} \label{thm:HtimesA2}
    Let $H$ be a normally monomial $p$-group with cyclic center and $K$ be a cyclic group. Then, 
\[ \delta(H\times K) = \delta(H) + \delta(K) = \max\cd(H) + 1. \]
\end{theorem}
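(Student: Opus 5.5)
We follow the scheme of the proof of Theorem \ref{thm:HtimesA1}, replacing the hypothesis on $\cd(H)$ by Lemma \ref{lemma:HtimesK}. Let $G = H\times K$ and $p^b = \max\cd(H)$. Since $H$ is normally monomial with cyclic center, $\delta(H) = p^b$ by \cite[Theorem 3.3]{KKS23} (alternatively, a faithful irreducible character exists and has degree $p^b$ by Lemma \ref{lemma:MannNormallyMonomial}). Since $K$ is cyclic, $\delta(K)=1$. Lemma \ref{lemma:deltaHtimesA} then gives the upper bound
\[ \delta(G) \leq p^b + 1. \]
It remains to prove the reverse inequality.

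Let $\mathcal{X}\subset \Irr(G)$ afford $\delta(G)$, and put $\psi=\sum_{\chi\in\mathcal{X}}\chi$. Every $\chi\in\Irr(G)$ has the form $\chi=\chi_1\chi_2$ with $\chi_1\in\Irr(H)$ and $\chi_2\in\Irr(K)=\lin(K)$, so $\chi(1)=\chi_1(1)$. Fix an element $x\in\mathcal{Z}(H)$ of order $p$; as $\mathcal{Z}(H)$ is cyclic, $\langle x\rangle$ is its unique subgroup of order $p$.

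The key step is to show that $(x,1)\in\ker(\chi)$ whenever $\chi(1)<p^b$.
\begin{itemize}
\item If $\chi_1$ is nonlinear with $\chi_1(1)<p^b$, Lemma \ref{lemma:HtimesK} gives $x\in\ker(\chi_1)$.
\item If $\chi_1$ is linear, then $x\in\mathcal{Z}(H)\cap H'$, because $H$ is non-abelian (being normally monomial with $p^b>1$; the abelian case is trivial) and a nontrivial normal subgroup of a $p$-group meets the center. Hence $x\in\ker(\chi_1)$.
\end{itemize}
Note that Lemma \ref{lemma:HtimesK} as stated covers linear $\chi_1$ as well, provided $p^b>1$. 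In both cases $(x,1)\in\ker(\chi_1\chi_2)$.

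Consequently, if every member of $\mathcal{X}$ had degree less than $p^b$, the element $(x,1)\neq 1$ would lie in $\bigcap_{\chi\in\mathcal{X}}\ker(\chi)$, contradicting the faithfulness condition \eqref{eq:SetXProperty}. So $\mathcal{X}$ contains some $\chi$ with $\chi(1)\geq p^b$, and since $\chi(1)\in\cd(H)$, in fact $\chi(1)=p^b$.

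Next we show $|\mathcal{X}|\geq 2$. Here $\mathcal{Z}(G)=\mathcal{Z}(H)\times K$. If $p\mid |K|$, this group is noncyclic, and Lemma \ref{lemma:Bardestanietal} gives $|\mathcal{X}|=d(\mathcal{Z}(G))=2$ directly when $K$ is a $p$-group. When $K$ is cyclic of arbitrary order, we argue instead that no single irreducible character of $G$ is faithful. Suppose $\chi=\chi_1\chi_2$ were faithful. Then $\chi_2$ restricted to $\mathcal{Z}(G)$ is a scalar character, so $\mathcal{Z}(G)=\mathcal{Z}(H)\times K$ would be cyclic, forcing $\gcd(|\mathcal{Z}(H)|,|K|)=1$. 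That case is \emph{not} excluded, so we must argue directly.

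If $\gcd(p,|K|)=1$ and $K\neq 1$, then $\chi_1\chi_2$ is faithful whenever $\chi_1$ is faithful and $\chi_2$ is a faithful linear character. This would give $\delta(G)=p^b$, not $p^b+1$. So the theorem seems to require $p\mid |K|$ (or the intended claim implicitly assumes it), and this is the main obstacle. We would handle it by checking the statement's intended hypotheses. Under $p\mid|K|$, we use the $p$-part: let $y\in K$ have order $p$. A single $\chi=\chi_1\chi_2$ restricts to $\langle x\rangle\times\langle y\rangle\cong C_p^2$ as a multiple of a linear character, which has nontrivial kernel. So no single irreducible character is faithful, and $|\mathcal{X}|\geq 2$.

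Combining the two steps, $\psi(1)\geq p^b+1$. Together with the upper bound this gives $\delta(G)=p^b+1=\delta(H)+\delta(K)$. This matches the shape of the argument in Theorem \ref{thm:HtimesA1}, with Lemma \ref{lemma:HtimesK} handling every degree below $p^b$ at once.
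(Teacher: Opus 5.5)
Your proposal is correct, and the obstacle you isolate is real: the statement is false as written whenever $K\neq 1$ and $p\nmid |K|$. Let $\chi_1$ be a faithful irreducible character of $H$, which has degree $p^b$ by Lemma~\ref{lemma:MannNormallyMonomial}, and let $\lambda$ be a faithful linear character of $K$. Then $\chi_1\lambda$ is faithful. Indeed, if $(h,k)\in\ker(\chi_1\lambda)$, then $h$ acts as the scalar $\lambda(k)^{-1}$, which has $p'$-order while $h$ has $p$-power order, so $h=1$ and $k=1$. Concretely, $D_8\times C_3$ embeds in $\GL_2(\mathbb{C})$, so $\delta(D_8\times C_3)=2\neq 3$. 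Since $\cd(D_8)=\{1,2\}$, the same example also refutes Theorem~\ref{thm:HtimesA1}.

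The paper's proof does exactly your first step. Using the argument of Lemma~\ref{lemma:HtimesK} on $\chi=(\lambda_1\lambda_2)\ind_{M\times K}^{G}$, together with $G'$, it shows that $(x,1)$ lies in the kernel of every constituent of degree less than $p^b$. Hence $\mathcal{X}$ must contain a character of degree $p^b$. It then concludes $\psi(1)=p^b+1$ directly from the upper bound of Lemma~\ref{lemma:deltaHtimesA}. That inference silently assumes $|\mathcal{X}|\geq 2$, and this is exactly what fails in the coprime case: there $\mathcal{X}$ is a single faithful irreducible character and $\psi(1)=p^b$.

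Under the extra hypothesis $p\mid |K|$, your argument is complete and supplies the missing step. The central subgroup $\langle x\rangle\times\langle y\rangle\cong C_p\times C_p$ acts by scalars in every irreducible representation, so no irreducible character of $G$ is faithful. Hence $|\mathcal{X}|\geq 2$ and $\psi(1)\geq p^b+1$.

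Applying Lemma~\ref{lemma:HtimesK} directly to $\chi_1$ is also cleaner than the paper's ``fixed $a<b$'' argument, since it handles all constituents of degree below $p^b$ at once. You do not need the citation for $\delta(H)=p^b$ either. It suffices that $\delta(H)\leq p^b$, because $p^b+1=\delta(G)\leq\delta(H)+\delta(K)\leq p^b+1$ forces equality throughout.

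One slip: in your paragraph on $|\mathcal{X}|\geq 2$, it is $\chi$, not $\chi_2$, whose restriction to $\mathcal{Z}(G)$ is scalar. For non-abelian $H$, the correct statement is $\delta(H\times K)=\max\cd(H)+1$ when $p\mid |K|$, and $\delta(H\times K)=\max\cd(H)$ when $p\nmid |K|$.
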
 
\begin{proof}
    Let $G = H\times K$ and suppose $\max \cd(H) = p^b$. Since $\cd(G) = \cd(H)$, we get that $p^b = \max\cd(G)$. Note that $\delta(K) = 1$, and by \cite[Remark 3.2]{KKS23}, $\delta(H) = \max \cd(H) = p^b$. From Lemma \ref{lemma:deltaHtimesA}, $\delta(G) \leq p^b + 1$.
    Let $\mathcal{X}\subset \Irr(G)$ affording $\delta(G)$, and $\psi = \sum_{\chi \in \mathcal{X}} \chi$ be a faithful character of $G$ of minimal degree. To prove the desired result, it is enough to show that there is an irreducible character of $G$ of degree $p^b$ in $\mathcal{X}$.\\
  Suppose, on the contrary, that $\mathcal{X}\cap \nl_{p^b}(G) = \emptyset$, where $\nl_{p^b}(G)$ denotes the set of all irreducible characters of $G$ of degree $p^b$. Then, $\mathcal{X}$ contains an irreducible character, say $\chi$, of $G$ of degree $p^a$, for some fixed $a<b$.
   Since $G$ is a cyclic extension of a normally monomial group, it itself is normally monomial. Hence, 
    \[ \chi = (\lambda_1 \lambda_2)\ind_{N}^{G}, \text{ for some } N = M\times K \trianglelefteq G, \text{ where } M \trianglelefteq H, \]
    and for some $\lambda_1 \in \lin(M)$ and $\lambda_2 \in \lin(K)$. Here $|M| = |H|/p^{a} > |H|/p^{b}$. Since $H$ is normally monomial, from \cite[Proposition 3]{M99}, maximum order of an abelian normal subgroup is $|H|/p^{b}$. Hence, $M$ must be a non-abelian normal subgroup of $H$. 
    Consequently, $M'$ is normal in $H$. Thus, $M'\cap \mathcal{Z}(H) \neq 1$. Let $x$ be an arbitrary element of order $p$ in $M' \cap \mathcal{Z}(H)$. Then, $1\neq x \in M'\cap \mathcal{Z}(H) \subset M' \subset \ker(\lambda_1)$. 
    This implies that
    \[ (x, 1) \in \ker(\lambda_1 \lambda_2) \Rightarrow (x, 1) \in \Core_{G}\left(\ker(\lambda_1 \lambda_2) \right) = \ker(\chi). \]
    Further, $(x,1) \in M' \times 1 = N' \subset G' \subset \ker(\eta)$, for every $\eta \in \lin(G)$. Therefore, in case all the irreducible characters in $\mathcal{X}$ are of degree 1 or $p^a$,
    then $\cap_{\chi \in \mathcal{X}} \ker(\chi)$ cannot be trivial. Thus, $\mathcal{X}$ must contain at least one irreducible character of degree $p^b$ of $G$. 
   Consequently, Lemma \ref{lemma:deltaHtimesA} implies that we must have
    \begin{equation*}
        \psi(1) = \delta(G) = 1 + p^a.
    \end{equation*}
    This concludes the proof.
\end{proof} 

In Theorem \ref{thm:HtimesA1} and \ref{thm:HtimesA2}, if we consider $K$ to be abelian but not cyclic, then $\delta(H\times K)$ may not be equal to $\delta(H)+\delta(K)$. 

\begin{example} \label{example:nilpotentcounterexample2}
\textnormal{We present a few examples. 
	\begin{enumerate}
		\item {\bf $H$ is a p-group with cyclic center and $K$ is abelian but not cyclic:} Let $H$ be an extraspecial group of order 8 and $K$ be the elementary abelian 3-group of rank 2. Then, $\mathcal{Z}(H) \cong C_2$, $\delta(H) = 2$, and $\delta(K) = 2$. However, by using {\sc GAP}, we obtain $\delta(H\times K) = 3$.
        \item {\bf $H$ is a p-group with non-cyclic center and $K$ is abelian but not cyclic:} Let $H$ be the group of order 16 mentioned in Example \ref{example:nilpotentcounterexample1}, and $K$ be the elementary abelian 3-group of rank 2. Then, $\delta(H) = 3$, and $\delta(K) = 2$. However, by using {\sc GAP}, we obtain $\delta(H\times K) = 3$. 
        \item {\bf $\mathcal{Z}(H)$ is cyclic and $K$ is abelian but not cyclic:} Let $H$ be the dihedral group of order $6$, $K$ be the cyclic group of order $2$, and $G = H\times K$. Note that, $H$ is a normally monomial group. Since $G$ is not a cyclic group, $G$ has no faithful linear character. From the character table of $G$, there exists a faithful irreducible character of degree 2. Therefore, $\delta(G) = 2$, whereas, $\delta(H) = 2$ and $\delta(C_2) = 1$ so that $\delta(H) + \delta(K) = 3$.
	\end{enumerate}
	Hence, in all the above examples we get $\delta(H\times K) < \delta(H) + \delta(K)$.}
\end{example}



In the next section, we study the representation dimension of some specific classes of $p$-groups.

\section{Computation of the representation dimension for various classes of $p$-groups} \label{sec:pgroups}

In this section, we focus our attention on computing the representation dimensions of some well-known classes of $p$-groups, namely, VZ $p$-groups, Camina $p$-groups, and metacyclic $p$-groups.

Recall that a group is called a VZ-group \cite{L09b} if all its nonlinear irreducible characters vanish off the center. If $G$ is a VZ-group, then from \cite{FAM01}, $\cd(G) = \{ 1, |G/\mathcal{Z}(G)|^{1/2} \}$. Readers can see \cite{L09a, L09b} for some interesting group-theoretic properties of VZ-groups. We prove the following.
\begin{prop} \label{prop:VZgroup}
    Let $G$ be a VZ $p$-group. Then, 
\[ \delta(G) = d(\mathcal{Z}(G)) - d(G') + d(G')|G:\mathcal{Z}(G)|^{1/2}. \]
\end{prop}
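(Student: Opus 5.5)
Set $r=d(\mathcal{Z}(G))$, $s=d(G')$ and $f=|G:\mathcal{Z}(G)|^{1/2}$. By \cite{FAM01}, $\cd(G)=\{1,f\}$. Take $\mathcal{X}\subset\Irr(G)$ affording $\delta(G)$; Lemma \ref{lemma:Bardestanietal} gives $|\mathcal{X}|=r$. If $\mathcal{X}$ has $k$ nonlinear characters, then $\delta(G)=(r-k)+kf$. Since $f>1$, this quantity increases with $k$. The plan is therefore to show that the least possible $k$ is exactly $s$: first that $k\ge s$, and then that some $\mathcal{X}$ with $k=s$ and $|\mathcal{X}|=r$ is faithful.

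\emph{Kernels.} For a VZ-group, every $\chi\in\nl(G)$ vanishes off $\mathcal{Z}(G)$. Hence $\chi_{\mathcal{Z}(G)}=f^2\mu$ for some $\mu\in\lin(\mathcal{Z}(G))$, so $\chi=f\,\mu\ind$-type and $\ker(\chi)=\ker(\mu)\le\mathcal{Z}(G)$. Moreover, $\chi\mapsto\mu$ is a bijection from $\nl(G)$ onto the set of $\mu\in\lin(\mathcal{Z}(G))$ with $G'\not\le\ker\mu$. This count works because $\mathcal{Z}(G)$ contains $G'$ for VZ-groups (nilpotency class $2$): $|\nl(G)|f^2=|G|-|G:G'|$ matches the number of $\mu$ nontrivial on $G'$ times $|G:\mathcal{Z}(G)|$. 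Every linear character has $G'$ in its kernel.

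\emph{Lower bound.} Let $\chi_1,\dots,\chi_k$ be the nonlinear members of $\mathcal{X}$, with central characters $\mu_1,\dots,\mu_k$. Faithfulness forces $G'\cap\bigcap_i\ker\mu_i=1$. This says that the restrictions $\mu_i|_{G'}$ generate $\widehat{G'}$, which is equivalent to their restrictions to $\Omega_1(G')$ spanning the dual of $\Omega_1(G')\cong C_p^{s}$. Hence $k\ge s$, and so $\delta(G)\ge r-s+sf$.

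\emph{Upper bound.} Using the ``$\mathcal{X}$ affording'' formalism, it suffices to exhibit $r$ characters, $s$ of them nonlinear, whose kernels meet trivially. Since $G'\le\mathcal{Z}(G)$, $\Omega_1(G')\le\Omega_1(\mathcal{Z}(G))\cong C_p^{r}$, so we may choose a basis $e_1,\dots,e_r$ of $\Omega_1(\mathcal{Z}(G))$ with $e_1,\dots,e_s$ spanning $\Omega_1(G')$. Using the standard criterion that a family of characters is faithful iff their kernels meet $\Omega_1(\mathcal{Z}(G))$ trivially, we seek characters $\theta_j$ with $\ker\theta_j\cap\Omega_1(\mathcal{Z}(G))=\langle e_i:i\ne j\rangle$. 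For $j\le s$, pick $\mu_j\in\lin(\mathcal{Z}(G))$ with this kernel on $\Omega_1$. Since $\mu_j$ is nontrivial on $e_j\in G'$, it corresponds to some $\chi_j\in\nl(G)$ with the desired kernel intersection. For $j>s$, pick a linear character of the abelian group $G/G'$ whose restriction to the image of $\Omega_1(\mathcal{Z}(G))$, namely $\Omega_1(\mathcal{Z}(G))G'/G'$, is nontrivial on $e_j$ and trivial on $e_i$ for $i\ne j$, $i>s$. Such a character exists because the $e_i$ with $i>s$ stay independent modulo $G'$, which requires $\Omega_1(\mathcal{Z}(G))\cap G'=\Omega_1(G')$. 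We may also arrange it to be trivial on the lifts of $e_i$ with $i\le s$, since those already lie in $G'$.

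The main obstacle is this last step. The linear character defined on $\mathcal{Z}(G)G'/G'$ must extend to $G/G'$, which is automatic because that group is abelian. But one must still check that its kernel meets $\Omega_1(\mathcal{Z}(G))$ in exactly the span of the other $e_i$. If the kernel intersection is larger than needed, we only need the intersection of all the kernels to be trivial, which is what the construction guarantees. This yields $\delta(G)\le(r-s)+sf$, matching the lower bound.
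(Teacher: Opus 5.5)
Your proof is correct, and it takes a different route from the paper's. The paper's argument consists of two citations. The lower bound comes from \cite[Lemma 2.5]{U23}, namely $|\mathcal{X}\cap\lin(G)|\le r-s$. The upper bound comes from \cite[Theorem 2]{PU23b}, which supplies $s$ nonlinear and $r-s$ linear characters satisfying \eqref{eq:SetXProperty}.

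You prove both directions directly from the structure of VZ-groups. The map $\chi\mapsto\mu$ is a bijection from $\nl(G)$ onto the characters of $\mathcal{Z}(G)$ that are nontrivial on $G'$, with $\ker\chi=\ker\mu$. With this, the lower bound says that at least $d(G')$ characters are needed to separate the points of $G'$. The upper bound becomes an explicit construction from a basis of $\Omega_1(\mathcal{Z}(G))$ adapted to $\Omega_1(G')$. Your version is self-contained and shows why $d(G')$ appears. The paper's version is shorter but depends entirely on the quasi-permutation literature.

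A few small repairs are needed.
\begin{itemize}
\item By Schur's lemma, $\chi_{\mathcal{Z}(G)}=f\mu$, not $f^2\mu$. Correspondingly, $\mu\ind_{\mathcal{Z}(G)}^{G}=f\chi$. Neither slip affects $\ker\chi=\ker\mu$ or your counting bijection.
\item The fact $G'\le\mathcal{Z}(G)$, which you quote, has a one-line proof. If $x\in G'\setminus\mathcal{Z}(G)$, every nonlinear character vanishes at $x$ and every linear character equals $1$ there. So the regular character takes the value $|G:G'|\neq 0$ at $x\neq 1$, which is impossible.
\item This also shows that the condition $\Omega_1(\mathcal{Z}(G))\cap G'=\Omega_1(G')$, which you say is required, holds automatically.
\end{itemize}

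The step you flag as the main obstacle is not one. For $j>s$, your $\lambda_j$ is trivial on every $e_i$ with $i\neq j$; for $i\le s$ this is because $e_i\in G'$. It is also nontrivial on $e_j$. So its restriction to $\Omega_1(\mathcal{Z}(G))\cong C_p^{r}$ has kernel exactly the hyperplane $\langle e_i : i\neq j\rangle$. The same holds for $\mu_j$ with $j\le s$. Now take any $\prod_i e_i^{a_i}$ lying in all the kernels. Evaluating the $j$-th character gives $\theta_j(e_j)^{a_j}=1$ for each $j$. This forces $p\mid a_j$ for every $j$, so the element is trivial and the family is faithful.
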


\begin{proof}
Suppose $d(\mathcal{Z}(G)) =r$ and $d(G') = k$ and let $\mathcal{X}\subset \Irr(G)$ affording $\delta(G)$. By Lemma \ref{lemma:Bardestanietal}, we get $|\mathcal{X}| = r$. Now, from \cite[Lemma 2.5]{U23}, $|\mathcal{X}\cap \lin(G)| \leq r - k$. Since $\cd(G) = \{ 1, |G:Z(G)|^{1/2} \}$, we deduce that $\delta(G) \geq r - k + k|G:\mathcal{Z}(G)|^{1/2}$. Further, by \cite[Theorem 2]{PU23b}, there exists a set $X$ containing $k$ nonlinear irreducible characters and $r-k$ linear characters of $G$ that satisfies Equation \eqref{eq:SetXProperty}. This implies that $\delta(G) \leq r - k + k|G:\mathcal{Z}(G)|^{1/2}$. Hence, the result follows.
\end{proof}

Now, a group $G$ is called a Camina group, if $\chi(g) = 0$ for all $\chi\in \nl(G)$ and for all $g\in G \setminus G{}'$. 
The concept of Camina groups emerges from a generalization of Frobenius groups, and it has been studied by several researchers (\cite{DS96, IL15, L14}). 
Camina $p$-groups have nilpotency class at most $3$ (see \cite{DS96}). If $G$ is a Camina $p$-group of class 2, then $G$ is a VZ $p$-group. Otherwise, if $G$ is a Camina $p$-group of class 3, then $|G/G'|=p^{2n}, |G'/\mathcal{Z}(G)|=p^{n}$ and $|G/\mathcal{Z}(G)|=p^{3n},$ where $n$ is even, and $\cd(G) = \{1, p^{n}, p^{3n/2}\}$ (see \cite{M81, PS14}). We prove the following. 
\begin{prop} \label{prop:Caminapgroup}
    Let $G$ be a Camina $p$-group. Then, $\delta(G) = d(\mathcal{Z}(G))|G:\mathcal{Z}(G)|^{1/2}$.
\end{prop}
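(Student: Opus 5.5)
The plan is to treat the two possible nilpotency classes separately. Since a Camina $p$-group has class at most $3$ \cite{DS96}, these are the only cases. In both cases I would prove the lower bound by showing that every member of a set $\mathcal{X}$ affording $\delta(G)$ has degree $|G:\mathcal{Z}(G)|^{1/2}$. Lemma \ref{lemma:Bardestanietal} then gives $\delta(G)=d(\mathcal{Z}(G))\,|G:\mathcal{Z}(G)|^{1/2}$.

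\textbf{Class $2$.} Here $G$ is a VZ $p$-group, so Proposition \ref{prop:VZgroup} applies. It only remains to check that $d(G')=d(\mathcal{Z}(G))$. I would show $\mathcal{Z}(G)=G'$. The inclusion $G'\subseteq \mathcal{Z}(G)$ holds because $G$ has class $2$. For the reverse inclusion, recall that in a Camina group every $g\notin G'$ has conjugacy class equal to the full coset $gG'$. Such a $g$ therefore has $|G'|>1$ conjugates and cannot be central, so $\mathcal{Z}(G)\subseteq G'$. Substituting $d(G')=d(\mathcal{Z}(G))=r$ into Proposition \ref{prop:VZgroup} gives $r-r+r|G:\mathcal{Z}(G)|^{1/2}$, which is the claim.

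\textbf{Class $3$.} Here $\cd(G)=\{1,p^n,p^{3n/2}\}$ and $|G:\mathcal{Z}(G)|^{1/2}=p^{3n/2}$. The argument above again gives $\mathcal{Z}(G)\subseteq G'$. The key observation is the following. For a $p$-group, a character is faithful if and only if its kernel meets $\mathcal{Z}(G)$ trivially. Hence, by minimality (Equation \eqref{eq:SetXProperty}), no $\chi\in\mathcal{X}$ can contain $\mathcal{Z}(G)$ in its kernel, since otherwise $\chi$ could be dropped from $\mathcal{X}$.

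Linear characters contain $G'\supseteq\mathcal{Z}(G)$ in their kernel, so they are excluded from $\mathcal{X}$. The main obstacle is to exclude characters of degree $p^n$, that is, to show that these are exactly the nonlinear characters of $G/\mathcal{Z}(G)$. I would try a counting argument first. The nonlinear members of $\Irr(G/\mathcal{Z}(G))$ have degree at most $p^n$, because $G/\mathcal{Z}(G)$ is again a Camina group of class $2$ (for $n$ as above), with centre $G'/\mathcal{Z}(G)$ of index $p^{2n}$. Next, every $\chi$ lying over a nontrivial $\lambda\in\Irr(\mathcal{Z}(G))$ satisfies $\chi(1)^2\le|G:\mathcal{Z}(G)|$. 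Summing squares over $\Irr(G\mid\lambda)$ gives exactly $|G:\mathcal{Z}(G)|$. To conclude, I would show that $\chi$ vanishes off $\mathcal{Z}(G)$, using the Camina property off $G'$ together with the structure of $G'/\mathcal{Z}(G)$; this is equivalent to $\chi$ being fully ramified, so that $\chi(1)=p^{3n/2}$. If this vanishing argument stalls, I would instead cite the known description of $\Irr(G)$ for class-$3$ Camina $p$-groups \cite{M81, PS14}. Either way, every $\chi\in\mathcal{X}$ has degree $p^{3n/2}$, which yields the lower bound $\delta(G)\ge d(\mathcal{Z}(G))\,p^{3n/2}$.

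\textbf{Upper bound in class $3$.} I would use that $\mathcal{Z}(G)$ is elementary abelian, since it lies in $G'$, which is elementary abelian for Camina $p$-groups. Write $r=d(\mathcal{Z}(G))$. Choose $\lambda_1,\dots,\lambda_r\in\Irr(\mathcal{Z}(G))$ with $\bigcap_i\ker\lambda_i=1$, and for each $i$ pick $\chi_i\in\Irr(G\mid\lambda_i)$, which has degree $p^{3n/2}$. Then
\[
\ker\Bigl(\sum_i\chi_i\Bigr)\cap\mathcal{Z}(G)=\bigcap_i\ker\lambda_i=1,
\]
so $\sum_i\chi_i$ is faithful of degree $r\,p^{3n/2}$. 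Combined with the lower bound, this gives $\delta(G)=r\,p^{3n/2}$, as claimed.
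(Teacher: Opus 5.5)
Your proposal is correct and is essentially the paper's argument. The paper shows $\mathcal{X}\subseteq\Irr(G\mid\mathcal{Z}(G))$ by citing \cite[Lemma 33]{PU23b}, which you instead reprove directly from the fact that a normal subgroup of a $p$-group meeting $\mathcal{Z}(G)$ trivially is trivial. It then cites \cite{PS14} for the fact that every character in $\Irr(G\mid\mathcal{Z}(G))$ has degree $|G:\mathcal{Z}(G)|^{1/2}$, which is exactly your fallback; that citation is genuinely needed, since your counting argument alone cannot rule out $\Irr(G\mid\lambda)$ consisting of $p^n$ characters of degree $p^n$.

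The paper treats both nilpotency classes uniformly this way. So your detour through Proposition \ref{prop:VZgroup} in class $2$ and your explicit upper-bound construction are harmless but unnecessary: Lemma \ref{lemma:Bardestanietal} already forces $\delta(G)=\sum_{\chi\in\mathcal{X}}\chi(1)=d(\mathcal{Z}(G))\,|G:\mathcal{Z}(G)|^{1/2}$.
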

 
\begin{proof}
    Let $d(\mathcal{Z}(G)) = r$ and let $\mathcal{X}\subset \Irr(G)$ affording $\delta(G)$. From Lemma \ref{lemma:Bardestanietal}, we get $|\mathcal{X}| = r$. Now, by \cite[Lemma 33]{PU23b}, $\mathcal{X} \subset \Irr(G|\mathcal{Z}(G))$, where $\Irr(G|\mathcal{Z}(G))= \Irr(G) \setminus \Irr(G/ \mathcal{Z}(G))$, and by the discussion in Section 3 of \cite{PS14}, we get that if $\chi\in \Irr(G|\mathcal{Z}(G))$ then $\chi(1) = |G:\mathcal{Z}(G)|^{1/2}$. Hence, for all $\chi \in \mathcal{X}$, $\chi(1) = |G:\mathcal{Z}(G)|^{1/2}$. Therefore, we get
    \[ \delta(G) = r|G:\mathcal{Z}(G)|^{1/2} = d(\mathcal{Z}(G))|G:\mathcal{Z}(G)|^{1/2}, \]
    which concludes the proof.
\end{proof}

Lastly, we compute the representation dimension of metacyclic $p$-groups. A group $G$ is called metacyclic if there exists a cyclic normal subgroup $N$ of $G$ such that $G/N$ is also cyclic. The class of metacyclic $p$-groups is one of the most well-studied in recent times. We obtain the following result.
\begin{prop} \label{prop:metacyclic}
    Let $G$ be a metacyclic $p$-group and 
\begin{enumerate}
    \item [(i)] let $\mathcal{Z}(G)$ be cyclic. If $G$ is split so that $G$ has the form
    \[ \langle a, b~|~ a^{p^{m}} = b^{p^{m-r}} = 1, b^{-1}ab = a^{1+p^{r}} \rangle, \]
    or $G$ is non-split so that $G$ has the form
    \[ \langle a, b~|~ a^{p^{m}} = 1, b^{p^{n}} = a^{p^{m-r}}, b^{-1}ab = a^{1+p^{r}} \rangle, \]
    then $\delta(G) = \delta_{\irr}(G) = p^{m-r}$.
    \item [(ii)] let $\mathcal{Z}(G)$ be non-cyclic so that $G$ has the form
    \[ \langle a, b~|~ a^{p^{m}} = 1, b^{p^{n}} = a^{p^{m-r}}, b^{-1}ab = a^{1+p^{r}} \rangle. \]
    Then, $\delta(G) = 1 + p^{m-r}$.
\end{enumerate}
\end{prop}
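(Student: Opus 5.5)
The plan is to exploit that metacyclic groups are metabelian, hence normally monomial \cite{B69}. Both parts then reduce to the structure of irreducible characters induced from linear characters of the cyclic normal subgroup $A=\langle a\rangle$ and the maximal abelian normal subgroup containing it.

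\emph{Part (i).} When $\mathcal{Z}(G)$ is cyclic, I would invoke \cite[Remark 3.2]{KKS23}, as in the proof of Theorem \ref{thm:HtimesA2}, to get $\delta(G)=\max\cd(G)$. By Lemma \ref{lemma:Bardestanietal} a minimal faithful character is then a single irreducible character, so $\delta(G)=\delta_{\irr}(G)$. It remains to compute $\max\cd(G)=p^{m-r}$. The plan is as follows.
\begin{enumerate}
\item Take $\lambda$ to be a faithful linear character of $A\cong C_{p^m}$.
\item Note that $b$ acts on $A$ as $a\mapsto a^{1+p^r}$. This automorphism has order $p^{m-r}$ (assuming $r\ge1$, and $r\ge 2$ when $p=2$, which is implicit in the presentations).
\item In the split case, the stabilizer of $\lambda$ in $\langle b\rangle$ is therefore $\langle b^{p^{m-r}}\rangle=1$, so $\lambda\ind_A^G$ is irreducible of degree $|G:A|=p^{m-r}$ and faithful.
\item In the non-split case, use the abelian normal subgroup $A\langle b^{p^{m-r}}\rangle$ of index $p^{m-r}$. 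Extend $\lambda$ to it and induce. Irreducibility follows from the inertia group computation, and faithfulness from the fact that $\mathcal{Z}(G)$ is cyclic and not in the kernel.
\item Get the matching upper bound from Ito's theorem: every character degree divides the index of an abelian normal subgroup. Alternatively, use Lemma \ref{lemma:MannNormallyMonomial}, since the faithful irreducible character has degree $\max\cd(G)$.
\end{enumerate}

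\emph{Part (ii).} Here $d(\mathcal{Z}(G))=2$, because the center of a metacyclic group is $2$-generated and not cyclic by hypothesis. By Lemma \ref{lemma:Bardestanietal}, $\mathcal{X}=\{\chi,\eta\}$.

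For the upper bound, take $\chi=\lambda\ind$ of degree $p^{m-r}$ as above, with $\lambda$ faithful on $A$. Its kernel meets $\mathcal{Z}(G)$ in a subgroup intersecting $A$ trivially, hence a cyclic subgroup generated by a power of $b$ (times something). Pick a linear character $\eta$ of $G/G'$ that is nontrivial on the order-$p$ elements of $\ker\chi\cap\mathcal{Z}(G)$. This is possible since $G/G'$ has a faithful-on-$\langle \bar b\rangle$ linear character, where $G'=\langle a^{p^r}\rangle\le A$. This gives $\delta(G)\le 1+p^{m-r}$.

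For the lower bound, show that characters of degree $<p^{m-r}$ all contain a common central element of order $p$, namely the unique order-$p$ element $a^{p^{m-1}}$ of $A$. The argument is as in Lemma \ref{lemma:HtimesK}. Such a character is induced from a normal subgroup $M$ larger than every abelian normal subgroup of maximal order. So $M$ is non-abelian, and $M'\le G'\le A$ is nontrivial and contains $a^{p^{m-1}}$. Linear characters also kill $G'\ni a^{p^{m-1}}$. Hence $\mathcal{X}$ needs a character of degree $\ge p^{m-r}$, and the other character has degree $\ge1$. It remains to check that both cannot have degree $p^{m-r}$ or larger, which would exceed the bound.

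\emph{Main obstacle.} The delicate step is the lower bound in (ii). It requires knowing that $p^{m-r}=\max\cd(G)$ and that every abelian normal subgroup of maximal order has index $p^{m-r}$, so that Lemma \ref{lemma:MannNormallyMonomial}-style arguments apply without a faithful irreducible character. I would handle this directly by showing that $A\langle b^{p^{m-r}}\rangle$ is a maximal abelian normal subgroup. For any normal subgroup $M$ of smaller index, $M\cap\langle b\rangle$-action on $A$ is nontrivial, so $M'\ne1$.
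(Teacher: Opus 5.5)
Your proof is correct, but it takes a genuinely different route from the paper. The paper's proof is essentially a chain of citations:
\begin{itemize}
\item For (i) it takes $\delta(G)=\delta_{\irr}(G)$ from \cite[Corollary 4.2]{KKS23}, and the degree $p^{m-r}$ of a faithful irreducible character from the proof of \cite[Theorem 4.30]{B95}.
\item For (ii) it quotes the proof of \cite[Lemma 3.10]{B00}: $\chi_1+\chi_2$ is faithful only when the degrees are $1$ and $p^{m-r}$.
\end{itemize}
You instead derive everything from the action $a\mapsto a^{1+p^r}$, which has order $p^{m-r}$:
\begin{itemize}
\item Clifford theory over the abelian normal subgroup $\langle a\rangle\langle b^{p^{m-r}}\rangle$ of index $p^{m-r}$ (just $\langle a\rangle$ in the split case) gives an irreducible character of degree $p^{m-r}$ whose kernel meets $\langle a\rangle$ trivially.
\item Ito's theorem caps all degrees at $p^{m-r}$.
\item The argument of Lemma \ref{lemma:HtimesK}, with $1\neq M'\le G'\le\langle a\rangle$ cyclic, shows that every irreducible character of degree less than $p^{m-r}$ kills $a^{p^{m-1}}$.
\item A linear character separates the remaining kernel, which is central and cyclic.
\end{itemize}
The paper's route is short but leaves all the work to external sources. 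Yours is self-contained and uses only three facts: $\langle a\rangle$ is cyclic normal of order $p^m$, $b$ acts by $1+p^r$, and $|G:\langle a\rangle|\ge p^{m-r}$. In particular it does not depend on the exact power relation for $b$.

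That independence actually matters here. The presentation printed in (ii) defines a group of order $p^{m+n}$ only when $n\ge m-r$. In that case $a^{p^{m-r}}=b^{p^n}\in\langle b^{p^{m-r}}\rangle$, so $\mathcal{Z}(G)=\langle a^{p^{m-r}},b^{p^{m-r}}\rangle=\langle b^{p^{m-r}}\rangle$ is cyclic. So (ii) is vacuous as literally stated. Your argument proves the intended statement for every metacyclic group of this shape with non-cyclic center.

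A few minor points:
\begin{itemize}
\item The step you flag as the main obstacle is immediate from Ito's theorem applied to the degree-$p^{m-r}$ character you have already built. It gives $p^{m-r}\mid|G:N|$ for every abelian normal subgroup $N$, so any normal $M$ with $|G:M|<p^{m-r}$ is non-abelian.
\item Your direct sketch of that step is incomplete as written. To get $M'\neq1$ you need an element of $M$ acting nontrivially on $M\cap\langle a\rangle$, not merely on $\langle a\rangle$. This requires the order count $|M\cap\langle a\rangle|\ge p^{r+k+1}$, where $p^k=|G:M\langle a\rangle|$.
\item The linear character $\eta$ exists because the element $z$ of order $p$ in $\ker\chi$ lies outside $\langle a\rangle\supseteq G'$, so some linear character is nontrivial at $z$. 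Being faithful on $\langle\bar b\rangle$ is not the relevant criterion.
\item The sentence ``it remains to check that both cannot have degree $p^{m-r}$ or larger'' is unnecessary. Your lower bound $p^{m-r}+1$ already matches the upper bound.
\end{itemize}
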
 

\begin{proof}
    First, assume that $\mathcal{Z}(G)$ is cyclic. Since $G$ is a $p$-group, from \cite[Corollary 4.2]{KKS23}, we have $\delta(G) = \delta_{\irr}(G)$. From the proof of \cite[Theorem 4.30]{B95}, we get that a faithful irreducible character has degree $p^{m-r}$. Hence, the result follows.\\
    Now, assume that $\mathcal{Z}(G)$ is non-cyclic. Then, $d(\mathcal{Z}(G)) = 2$. Let $\chi$ be a faithful character of $G$ such that $\chi(1) = \delta(G)$. From Lemma \ref{lemma:Bardestanietal}, $\chi = \chi_1 + \chi_2$, for some $\chi_1, \chi_2 \in \Irr(G)$. From the proof of \cite[Lemma 3.10]{B00}, we get that $\chi$ is faithful only when $\chi_1(1) = 1$ and $\chi_2(1) = p^{m-r}$. Hence, the result follows.
\end{proof}

Now, we compute the representation dimension of groups of order $p^6$. The groups of order $p^6$ for $p\geq 5$ are classified into 43 different isoclinic families $\Phi_i$, $1\leq i \leq 43$; $\Phi_1$ contains all the abelian groups. Groups of order $2^6$ and $3^6$ have different presentations. Readers can see \cite{HS64,M69} and \cite{Baldwin87} for presentations of groups of order $2^6$ and $3^6$, respectively; they are also included in the {\sc SmallGroups} library \cite{SmallGroups}
in {\sf GAP} \cite{GAP}. Their representation dimensions can be obtained easily by using the {\sf GAP} \cite{GAP} function {\tt EmbeddingDegree} developed by Kaur et al. \cite{KKS23}. \\
Henceforth, $p$ denotes a prime greater than $5$. Newman et al. 
\cite{NO'BV04} established that there are  
\[ 3p^2 + 39 p + 344 + 24\gcd(p-1,3) + 11 \gcd(p-1,4) + 2 \gcd(p-1,5) \]
groups of order $p^{6}$, up to isomorphism.  We urge readers to see \cite{O'BPU24} for the information on group presentations, and \cite[Table 4.1]{J80} for the information on the order of the center, the structure of the derived subgroup, and the set of character degrees corresponding to each isoclinic family. Henceforth in this section, $G$ denotes a non-abelian group of order $p^6$.

The following results help us to list representation dimension of groups of order $p^{6}$, in Table \ref{t:delta G p6}.
\begin{lemma} \label{lemma:degree center p}
    Let $G$ belongs to $\Phi_i$ where $i \in \{ 22, 24, 25, \ldots, 43 \}$. If $G \in \Phi_{35}$, then $\delta(G) = p$, otherwise, $\delta(G) = p^2$.
\end{lemma}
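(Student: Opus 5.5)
The plan is to use, family by family, the data in \cite[Table 4.1]{J80}: the order of $\mathcal{Z}(G)$, the structure of $G'$, and $\cd(G)$. The families $\Phi_i$ with $i\in\{22,24,25,\ldots,43\}$ should be exactly the non-abelian families of order $p^6$ with $|\mathcal{Z}(G)|=p$. Isoclinism preserves the isomorphism type of the central quotient and of $G'$, but not the center itself. Once $|\mathcal{Z}(G)|=p$ holds, however, the center is cyclic of order $p$, so $d(\mathcal{Z}(G))=1$. By Lemma~\ref{lemma:Bardestanietal}, any $\mathcal{X}$ affording $\delta(G)$ is then a single faithful irreducible character. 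Hence
\[
\delta(G)=\delta_{\irr}(G)=\min\{\chi(1):\chi\in\Irr(G)\text{ faithful}\}.
\]
Moreover, since $\mathcal{Z}(G)$ is the unique minimal normal subgroup, an irreducible $\chi$ is faithful if and only if $\mathcal{Z}(G)\not\subseteq\ker(\chi)$, i.e. $\chi\in\Irr(G\mid\mathcal{Z}(G))$.

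So the task reduces to finding the least degree in $\Irr(G\mid \mathcal{Z}(G))$. First, I obtain a lower bound. A faithful irreducible $\chi$ satisfies $\chi(1)^2\le|G:\mathcal{Z}(G)|=p^5$, so $\chi(1)\le p^2$. Also $\chi(1)\ne 1$, because $G$ is non-abelian and a faithful linear character would make $G$ abelian. Hence $\chi(1)\in\{p,p^2\}$.

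Next, I show that every family except $\Phi_{35}$ has all faithful irreducibles of degree $p^2$. For the families in which $p^2\in\cd(G)$ but $p\notin\cd(G)$ (that is, $\cd(G)=\{1,p^2\}$), this is immediate. For the families with $\cd(G)=\{1,p,p^2\}$, I need every character of degree $p$ to contain $\mathcal{Z}(G)$ in its kernel. Every such $\chi$ is induced from a linear character $\lambda$ of a maximal subgroup $M$. If $M$ is non-abelian, then $M'\trianglelefteq G$ and $1\ne M'\cap\mathcal{Z}(G)=\mathcal{Z}(G)$ lies in $\ker\chi$, exactly as in the proof of Lemma~\ref{lemma:HtimesK}. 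So the only way to get a faithful character of degree $p$ is for $G$ to have an abelian maximal subgroup $A$. In that case Ito-type arguments give $\cd(G)\subseteq\{1,p\}$, so $p^2\notin\cd(G)$. Conversely, if $p^2\notin\cd(G)$, the faithful characters, which exist, have degree $p$.

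Thus the lemma reduces to the following dichotomy: among these families, $\cd(G)=\{1,p\}$ exactly for $\Phi_{35}$, and $p^2\in\cd(G)$ for all the others. I would read this off from \cite[Table 4.1]{J80}. The main obstacle is precisely verifying this table data, together with confirming that $|\mathcal{Z}(G)|=p$ for every member of each listed family. The latter follows because the center of a stem group has order $|G|/|G:\mathcal{Z}(G)|$, and $|G:\mathcal{Z}(G)|=p^5$ is an isoclinism invariant. For $\Phi_{35}$, I would check directly from the presentation in \cite{O'BPU24} that $G$ has an abelian maximal subgroup, or else cite $\cd(G)=\{1,p\}$ from the table. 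The conclusion $\delta(G)=p$ for $\Phi_{35}$ and $\delta(G)=p^2$ otherwise then follows.
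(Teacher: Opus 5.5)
Your proposal is correct, but it takes a different route from the paper. The paper's proof takes $\cd(G)$ from \cite{O'BPU24}: $\{1,p\}$ for $\Phi_{35}$ and $\{1,p,p^2\}$ for the rest. It then invokes \cite[Theorem 4.4]{KKS23} as a black box to get $\delta(G)=\max\cd(G)$. It never states that these groups have cyclic center, although that theorem presumably needs it.

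You instead prove the needed special case of that theorem yourself:
\begin{itemize}
\item $|\mathcal{Z}(G)|=p$ gives $\delta(G)=\delta_{\irr}(G)$, and an irreducible character is faithful exactly when $\mathcal{Z}(G)\not\subseteq\ker\chi$.
\item The bound $\chi(1)^2\le|G:\mathcal{Z}(G)|$ confines faithful degrees to $\{p,p^2\}$.
\item If $\chi=\lambda\ind_M^G$ with $M$ a non-abelian maximal subgroup, then $\mathcal{Z}(G)\subseteq M'\subseteq\ker\chi$. This is the same mechanism the paper uses in Lemma \ref{lemma:HtimesK} and in case (ii) of Theorem \ref{thm:HtimesA1}.
\item If $M$ is an abelian maximal subgroup, Ito's theorem forces $\cd(G)\subseteq\{1,p\}$.
\end{itemize}

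The paper's version buys brevity. Yours buys three things:
\begin{itemize}
\item It is self-contained.
\item It makes explicit the hypothesis $|\mathcal{Z}(G)|=p$ that the paper uses silently. You also justify it correctly via isoclinism invariance of $|G:\mathcal{Z}(G)|$ for families whose stem groups have order $p^6$.
\item It yields a structural criterion: $\delta(G)=p$ if and only if $G$ has an abelian maximal subgroup, and $\delta(G)=p^2$ otherwise. The table check then reduces to a cleaner question.
\end{itemize}

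Both arguments rest on the same external data from \cite{J80} and \cite{O'BPU24}: that the listed families have center of order $p$, and that only $\Phi_{35}$ has $p^2\notin\cd(G)$.
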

\begin{proof}
    If $G \in \Phi_{35}$, then $\cd(G)=\{1,p\}$, and hence by \cite[Theorem 4.4]{KKS23}, $\delta(G)$ equals $p$. Otherwise, $\cd(G)=\{1,p,p^{2}\}$ (see \cite{O'BPU24}), and hence by \cite[Theorem 4.4]{KKS23}, we get $\delta(G)=p^{2}$.
\end{proof}
We obtain the following Corollary with the help of Proposition \ref{prop:VZgroup}.
\begin{cor} \label{cor: VZgroups degree}
    Let $G$ be a VZ $p$-group.
    \begin{enumerate}
        \item If $G\in \Phi_2$, then $\delta(G) = p$, $p+1$, $p+2$, or $p+3$.
        \item If $G\in \Phi_5$, then $\delta(G) = p^2$, or $p^2+1$.
        \item If $G\in \Phi_{15}$, then $\delta(G) = 2p^2$.
    \end{enumerate}
\end{cor}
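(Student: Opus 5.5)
The plan is to apply Proposition \ref{prop:VZgroup} directly: $\delta(G) = d(\mathcal{Z}(G)) - d(G') + d(G')\,|G:\mathcal{Z}(G)|^{1/2}$. For each of the three families we need three invariants of a VZ group $G$ of order $p^6$ in that family: $|G:\mathcal{Z}(G)|$, $d(G')$, and $d(\mathcal{Z}(G))$. The first two are invariants of the isoclinism family, since isoclinic groups have isomorphic central quotients and isomorphic derived subgroups. We will read them off from \cite[Table 4.1]{J80}. The rank of the center varies within a family, so we bound it using $|\mathcal{Z}(G)| = p^6/|G:\mathcal{Z}(G)|$ and $G' \le \mathcal{Z}(G)$. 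The latter holds because a VZ group has class $2$: $\cd(G)=\{1,|G:\mathcal{Z}(G)|^{1/2}\}$ forces $G/\mathcal{Z}(G)$ to be abelian, as is standard for VZ groups.

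\textbf{Family $\Phi_2$.} Here $|G:\mathcal{Z}(G)| = p^2$ and $G'\cong C_p$, so $d(G')=1$ and $\delta(G) = d(\mathcal{Z}(G)) - 1 + p$. The center has order $p^4$, so $1\le d(\mathcal{Z}(G))\le 4$. This gives $\delta(G)\in\{p,p+1,p+2,p+3\}$.

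\textbf{Family $\Phi_5$.} Here $|G:\mathcal{Z}(G)|=p^4$ and $G'\cong C_p$ (extraspecial-type). So $\delta(G)=d(\mathcal{Z}(G))-1+p^2$, and $|\mathcal{Z}(G)|=p^2$ gives $d(\mathcal{Z}(G))\in\{1,2\}$. This gives $p^2$ or $p^2+1$.

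\textbf{Family $\Phi_{15}$.} Here $|G:\mathcal{Z}(G)|=p^4$ and $G'\cong C_p\times C_p$. Since $G'\le\mathcal{Z}(G)$ and $|\mathcal{Z}(G)|=p^2$, we get $\mathcal{Z}(G)=G'$, so $d(\mathcal{Z}(G))=d(G')=2$. Hence $\delta(G)=2-2+2p^2=2p^2$.

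The main obstacle is bookkeeping: confirming from \cite{J80} and \cite{O'BPU24} the exact values of $|\mathcal{Z}(G)|$ and $G'$ for $\Phi_2,\Phi_5,\Phi_{15}$. Since the corollary only claims membership in a list, each listed value need not be attained by an actual group in the family, so we only need the upper bounds on $d(\mathcal{Z}(G))$.
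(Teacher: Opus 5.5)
Your proposal is correct and follows essentially the same route as the paper: read $|G:\mathcal{Z}(G)|$ and $d(G')$ for each family from James's table, bound $d(\mathcal{Z}(G))$ from $|\mathcal{Z}(G)|$, and substitute into Proposition~\ref{prop:VZgroup}. The one small difference is in $\Phi_{15}$, where you derive $\mathcal{Z}(G)=G'$ (hence $d(\mathcal{Z}(G))=2$) from $G'\le\mathcal{Z}(G)$ and $|\mathcal{Z}(G)|=p^2$, whereas the paper simply states $d(\mathcal{Z}(G))=2$ without justification.
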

\begin{proof}
    Note that if $G\in \Phi_2$, then $|\mathcal{Z}(G)| = p^4$ and $\cd(G) = \{1, p\}$, and if $G\in \Phi_i$ (for $i=5,15$), then $|\mathcal{Z}(G)| = p^2$ and $\cd(G) = \{1, p^2\}$. In all the above cases, $\cd(G) = \{ 1, |G:\mathcal{Z}(G)|^{1/2} \}$. Hence $\Phi_2$, $\Phi_5$ and $\Phi_{15}$ are VZ-group families. From \cite[Table 4.1]{J80}, $d(G') = 1$ if $G\in \Phi_i$ (for $i=2,5$), and $d(G') = 2$ if $G\in \Phi_{15}$. Further, for $G\in \Phi_2$, $d(\mathcal{Z}(G)) = 1,2,3$, or $4$, for $G\in \Phi_5$, $d(\mathcal{Z}(G)) = 1$, or $2$, and for $G\in \Phi_{15}$, $d(\mathcal{Z}(G)) = 2$. By Proposition \ref{prop:VZgroup}, we get $\delta(G) = p$, $p+1$, $p+2$, or $p+3$ when $G\in \Phi_2$, $\delta(G) = p^2$, or $p^2+1$ when $G\in \Phi_5$, and $\delta(G) = 2p^2$ when $G\in \Phi_{15}$. 
\end{proof}

 Further, if $G \in \Phi_{14}$ then $\mathcal{Z}(G) \cong C_{p^2}$ and $\cd(G) = \{ 1,p,p^2 \}$. By \cite[Theorem 4.4]{KKS23}, $\delta(G) = p^2$.
Now, we are left with the families $\Phi_3$, $\Phi_4$, $\Phi_6, \ldots, \Phi_{13}$, $\Phi_{16},\ldots, \Phi_{21}$ and $\Phi_{23}$; the computation of representation dimension for these families are more difficult to handle. We obtain their representation dimension by utilizing the results obtained in \cite{KKS23} and \cite{O'BPU24}. Note that in the families $\Phi_3$, $\Phi_4$, $\Phi_6, \ldots, \Phi_{10}$, there are some groups which can be expressed as a direct product of an abelian group and a non-abelian group of order less than $p^6$. For such groups, the representation dimension can easily be computed by utilizing Theorem \ref{thm:HtimesKpgroup} and \cite{KKS23}. We now focus on those groups that are not a nontrivial direct product of groups. We start with the $\Phi_3$ family.

\begin{itemize}
    \item $\mathbf{\Phi_3}$ {\bf family:} For $G\in \Phi_3$, $G' \cong C_p \times C_p$, $\mathcal{Z}(G) \cong C_{p^3}$, $C_{p^2}\times C_p$, or $C_{p}^3$, and $\cd(G) = \{ 1, p\}$. When the center is cyclic, $\delta(G) = p$ by \cite[Theorem 4.4]{KKS23}. Now, assume that $\mathcal{Z}(G) \cong C_{p^2}\times C_p$. From Lemma \ref{lemma:Bardestanietal}, $\chi = \chi_1+\chi_2$, for some $\chi_1, \chi_2 \in \Irr(G)$. Since neither of $\chi_i$ can be linear, we must have $\chi(1) \geq p+1$. We prove the reverse inequality by considering each group in $\Phi_3$ with $\mathcal{Z}(G) \cong C_{p^2} \times C_p$ separately. For example, take $G = G_{(3, 10r)}$ (for $r=1$ or $\nu$, where $\nu$ is the smallest positive integer which is a quadratic non-residue modulo $p$). By \cite[Table 5]{O'BPU24}, there exists $\mathcal{X}\subset \Irr(G_{(3, 10r)})$ satisfying Equation \eqref{eq:SetXProperty} and $\sum_{\rho \in \mathcal{X}}\rho(1) = p+1$. Then, $\delta(G_{(3, 10r)}) \leq p+1$. Therefore, $\delta(G_{(3, 10r)}) = p+1$. Along similar lines, we prove that for all $G\in \Phi_3$ with $\mathcal{Z}(G) \cong C_{p^2} \times C_p$, we get $\delta(G) = p+1$.\\
    Now, we consider the case when $\mathcal{Z}(G) \cong C_{p}^3$. From Lemma \ref{lemma:Bardestanietal}, $\chi = \sum_{i=1}^3\chi_i$, for some $\chi_1, \chi_2, \chi_3 \in \Irr(G)$. 
    Clearly, $\delta(G) \geq p+2$. We prove the reverse inequality by considering each group in $\Phi_3$ with $\mathcal{Z}(G) \cong C_{p}^3$ separately, as above. We get that for all $G\in \Phi_3$ with $\mathcal{Z}(G) \cong C_p^3$, $\delta(G) = p+2$.
    \item $\mathbf{\Phi_{4}}$ and $\mathbf{\Phi_{6}}$ {\bf families:} For $G\in \Phi_{4}$, $G' \cong C_{p}\times C_{p}$, $\mathcal{Z}(G) \cong C_{p^2}\times C_p$,  or $C_{p}^3$, $G'\cap \mathcal{Z}(G) \cong C_p \times C_p$ and $\cd(G) = \{ 1, p \}$. Suppose $\mathcal{Z}(G) \cong C_{p^2}\times C_p$. Here, $d(G' \cap \mathcal{Z}(G)) = d(\mathcal{Z}(G))$, and hence $G'$ contains all the $p$-ordered elements of $\mathcal{Z}(G)$. Let $\chi$ be a faithful character of $G$ such that $\chi(1) = \delta(G)$.  From Lemma \ref{lemma:Bardestanietal}, $\chi = \chi_1+\chi_2$, for some $\chi_1, \chi_2 \in \Irr(G)$. It is easy to see that if either of  $\chi_1$ and $\chi_2$ is linear, then $\ker(\chi_1) \cap \ker(\chi_2) \neq 1$. Thus, $\delta(G) = \chi(1)$ must be $2p$.\\
    Now, suppose $\mathcal{Z}(G) \cong C_{p}^3$. From Lemma \ref{lemma:Bardestanietal}, $\chi = \sum_{i=1}^3\chi_i$, for some $\chi_1, \chi_2, \chi_3 \in \Irr(G)$. Note that here $G' \subset \mathcal{Z}(G)$. Since at most two of $\chi_i$ can be linear, assume, without loss of generality, that $\chi_1$ and $\chi_2$ are linear. Now, $\cap_{i=1, i\neq j}^3 \ker(\chi_i) \neq 1$ for each $j=1,2,3$. Hence, $\cap_{i=1, i\neq j}^3 (\ker(\chi_i) \cap \mathcal{Z}(G)) \neq 1$ for each $j=1,2,3$. However, $\ker(\chi_1) \cap \mathcal{Z}(G) = G' = \ker(\chi_2) \cap \mathcal{Z}(G)$, which implies that $(\ker(\chi_1) \cap \mathcal{Z}(G)) \cap \ker(\chi_3) \cap \mathcal{Z}(G) = 1$, which is a contradiction. Therefore, at most one of $\chi_1$ ($i=1,2,3$) can be linear. Thus, $\chi(1) \geq 2p+1$.\\
    We prove the reverse inequality by considering each group in $\Phi_4$ with an elementary abelian center separately. For example, take $G = G_{(4, 28)}$. By \cite[Table 5]{O'BPU24}, there exists $\mathcal{X}\subset \Irr(G_{(4, 28)})$ satisfying Equation \eqref{eq:SetXProperty} and $\sum_{\rho \in \mathcal{X}}\rho(1) = 2p+1$. Then, $\delta(G_{(4, 28)}) \leq 2p+1$. Therefore, $\delta(G_{(4, 28)}) = 2p+1$. Along similar lines, we have computed the representation dimension for the rest of the groups in $\Phi_{4}$, and we conclude that for all  $G \in \Phi_{4}$, $\delta(G) = 2p$ when $\mathcal{Z}(G) \cong C_{p^2} \times C_p$, and $\delta(G) = 2p+1$  when $\mathcal{Z}(G) \cong C_{p}^3$.\\
    By similar arguments, we obtain that for all $G \in \Phi_{6}$, $\delta(G) = 2p$ when $\mathcal{Z}(G) \cong C_{p^2} \times C_p$, and $\delta(G) = 2p+1$  when $\mathcal{Z}(G) \cong C_{p}^3$.
    \item $\mathbf{\Phi_{7}}$, $\mathbf{\Phi_{8}}$ and $\mathbf{\Phi_{10}}$ {\bf families:} For $G\in \Phi_{7}$, $G' \cong C_{p}\times C_{p}$, $\mathcal{Z}(G) \cong C_{p^2}$,  or $C_p \times C_p$, $G'\cap \mathcal{Z}(G) \cong C_p$ and $\cd(G) = \{ 1, p, p^2 \}$. If $\mathcal{Z}(G)$ is cyclic, then $\delta(G) = p^2$ by \cite[Theorem 4.4]{KKS23}. Now, suppose $\mathcal{Z}(G) \cong C_p \times C_p$. From Lemma \ref{lemma:Bardestanietal}, $\chi = \chi_1+\chi_2$, for some $\chi_1, \chi_2 \in \Irr(G)$. 
    Since both the characters cannot be linear, we must have $\chi(1) \geq p+1$. On the other hand, if $\eta$ is a nonlinear character of degree $p$ of $G$, then $\eta = \psi\ind_{H}^{G}$, for some linear character $\psi$ of some subgroup $H$ of index $p$ in $G$. Note that $H$ is a normal subgroup of $G$. Since $H'$ is a characteristic subgroup of $H$, we get that $H'$ is normal in $G$. Hence, $H'\cap \mathcal{Z}(G) \neq 1$. In fact, we get $H'\cap \mathcal{Z}(G) = G'\cap \mathcal{Z}(G) \subset \Core_{G}(\ker(\psi)) = \ker(\eta)$. Hence, if one of $\chi_1$ and $\chi_2$ is linear, then the other cannot be of degree $p$. Further, both $\chi_1$ and $\chi_2$ cannot be simultaneously of degree $p$. Thus, $\delta(G) \geq p^2+1$. \\
    We prove the reverse inequality by considering each group in the family separately. For example, take $G = G_{(7, 14)}$. By \cite[Table 6]{O'BPU24}, there exists $\mathcal{X}\subset \Irr(G_{(7, 14)})$ satisfying Equation \eqref{eq:SetXProperty} and $\sum_{\rho \in \mathcal{X}}\rho(1) = p^2+1$. Then, $\delta(G_{(7, 14)}) \leq p^2+1$. Therefore, $\delta(G_{(7, 14)}) = p^2+1$. Along similar lines, we have computed the representation dimension for the rest of the groups in $\Phi_{7}$, and we conclude that $\delta(G) = p^2+1$ for all  $G \in \Phi_{7}$ when the center is non-cyclic.\\
    By similar arguments, we obtain that for each $G \in \Phi_{i}$ where $i=8,10$, $\delta(G) = p^2$ when $\mathcal{Z}(G)$ is cyclic, and $\delta(G) = p^2+1$ when $\mathcal{Z}(G)$ is non-cyclic.
     \item $\mathbf{\Phi_{9}}$ {\bf family:} For $G\in \Phi_{9}$, $G' \cong C_{p}^3$, $\mathcal{Z}(G) \cong C_{p^2}$, or $C_p \times C_p$, $G'\cap \mathcal{Z}(G) \cong C_p$ and $\cd(G) = \{ 1, p\}$. If $\mathcal{Z}(G)$ is cyclic, then $\delta(G) = p$ by \cite[Theorem 4.4]{KKS23}. Now, suppose $\mathcal{Z}(G) \cong C_p \times C_p$. From Lemma \ref{lemma:Bardestanietal}, $\chi = \chi_1+\chi_2$, for some $\chi_1, \chi_2 \in \Irr(G)$. Since both the characters cannot be linear, we must have $\chi(1) \geq p+1$. On the other hand, if $\eta \in \nl(G)$ then $\eta = \psi\ind_{H}^{G}$, for some linear character $\psi$ of some subgroup $H$ of index $p$ in $G$. Note that $H$ is a normal subgroup of $G$. Since $H'$ is a characteristic subgroup of $H$, we get that $H'$ is normal in $G$. Hence, $H'\cap \mathcal{Z}(G) \neq 1$. In fact, we get $H'\cap \mathcal{Z}(G) = G'\cap \mathcal{Z}(G) \subset \Core_{G}(\ker(\psi)) = \ker(\eta)$, for each $\eta \in \nl(G)$. Hence, at most one of $\chi_1$ and $\chi_2$ can be of degree $p$. Hence, $\delta(G) \leq p+1$, and thus, we get $\delta(G) = p+1$.
    \item $\mathbf{\Phi_{11}}$ {\bf family:} For $G\in \Phi_{11}$, $G' = \mathcal{Z}(G) \cong C_p^3$, and $\cd(G) = \{ 1, p\}$. Let $\chi$ be a faithful character of $G$ such that $\chi(1) = \delta(G)$. From Lemma \ref{lemma:Bardestanietal}, $\chi = \sum_{i=1}^3\chi_i$, for some $\chi_1, \chi_2, \chi_3 \in \Irr(G)$. It is easy to see that if $\chi_i \in \lin(G)$ for any $i$ ($i=1,2,3$), then $\cap_{i=1}^3 \ker(\chi_i) \neq 1$, which is a contradiction. Hence, $\chi_i(1) = p$ for each $i$, and hence, $\delta(G) = \chi(1) = 3p$.
    \item $\mathbf{\Phi_{12}}$ and $\mathbf{\Phi_{19}}$ {\bf families:} For $G\in \Phi_{12}$, $G' = \mathcal{Z}(G) \cong C_p \times C_p$, and $\cd(G) = \{ 1, p, p^2\}$. Let $\chi$ be a faithful character of $G$ such that $\chi(1) = \delta(G)$. From Lemma \ref{lemma:Bardestanietal}, $\chi = \chi_1+\chi_2$, for some $\chi_1, \chi_2 \in \Irr(G)$. It is easy to see that if $\chi_i \in \lin(G)$ for any $i$ ($i=1,2$), then $\cap_{i=1}^2 \ker(\chi_i) \neq 1$, which is a contradiction. Hence, $\chi_i(1) \geq p$ for each $i$, and hence, $\delta(G) \geq 2p$. On the other hand, building on the arguments of the proof of Lemma 4.21 in \cite{O'BPU24}, we obtain a set of two nonlinear irreducible characters of degree $p$ of $G$ such that the intersection of their kernels is trivial. Hence, $\delta(G)\leq 2p$. Finally, we get $\delta(G) = 2p$.\\
    By similar arguments, we obtain $\delta(G) = 2p$ for each $G \in \Phi_{19}$.
    \item $\mathbf{\Phi_{13}}$, $\mathbf{\Phi_{18}}$ and $\mathbf{\Phi_{20}}$ {\bf families:} For $G\in \Phi_{13}$, $G' = \mathcal{Z}(G) \cong C_p \times C_p$, and $\cd(G) = \{ 1, p, p^2\}$. Let $\chi$ be a faithful character of $G$ such that $\chi(1) = \delta(G)$. From Lemma \ref{lemma:Bardestanietal}, $\chi = \chi_1+\chi_2$, for some $\chi_1, \chi_2 \in \Irr(G)$. By the proof of Lemma 4.22 of \cite{O'BPU24}, $\chi_i \in \nl(G)$ for both $i$ and $\chi_i(1) = p$ for at most one $i$ where $i=1,2$. Hence, $\delta(G) \geq p^2 + p$. We prove the reverse inequality by considering each group in the family separately. For example, take $G = G_{(13, 1)}$. By \cite[Table 6]{O'BPU24}, there exists $\mathcal{X}\subset \Irr(G_{(13, 1)})$ satisfying Equation \eqref{eq:SetXProperty} and $\sum_{\rho \in \mathcal{X}}\rho(1) = p^2 + p$. Then, $\delta(G_{(13, 1)}) \leq p^2 + p$. Therefore, $\delta(G_{(13, 1)}) = p^2 + p$. Along similar lines, we have computed the representation dimension for the rest of the groups in $\Phi_{13}$, and we conclude that $\delta(G) = p^2+p$ for all  $G \in \Phi_{13}$.\\
    By similar arguments, we obtain $\delta(G) = p^2+p$ for each $G \in \Phi_{i}$, where $i=18,20$.
    \item $\mathbf{\Phi_{16}}$ {\bf family:} For $G\in \Phi_{16}$, $\mathcal{Z}(G) \subset G'$, and $\cd(G) = \{ 1, p\}$. This implies that $\mathcal{Z}(G) \subset \ker(\eta)$ for each $\eta \in \lin(G)$. If $\mathcal{X}\subset \Irr(G)$ affords $\delta(G)$, it is easy to see that $\mathcal{X}\cap \lin(G) = \emptyset$. Further, $d(\mathcal{Z}(G)) = 2$, which implies that $\delta(G) = 2p$.
    \item $\mathbf{\Phi_{17}}$ and $\mathbf{\Phi_{23}}$ {\bf families:} For $G\in \Phi_{17}$, $G'\cong C_{p}^3$, $\mathcal{Z}(G)\cong C_p \times C_p$, $\mathcal{Z}(G) \subset G'$ and $\cd(G) = \{ 1,p,p^2 \}$. Let $\chi$ be a faithful character of $G$ such that $\chi(1) = \delta(G)$. From Lemma \ref{lemma:Bardestanietal}, $\chi = \chi_1+\chi_2$, for some $\chi_1, \chi_2 \in \Irr(G)$. Clearly, $\delta(G)\geq 2p$. 
    We prove the reverse inequality by considering each group in the family separately. For example, take $G = G_{(17, 20)}$. By \cite[Table 6]{O'BPU24}, there exists $\mathcal{X}\subset \Irr(G_{(17, 20)})$ satisfying Equation \eqref{eq:SetXProperty} and $\sum_{\rho \in \mathcal{X}}\rho(1) = 2p$. Then, $\delta(G_{(17, 20)}) \leq 2p$. Therefore, $\delta(G_{(17, 20)}) = 2p$. Along similar lines, we have computed the representation dimension for the rest of the groups in $\Phi_{17}$, and we conclude that $\delta(G) = 2p$ for all  $G \in \Phi_{17}$.\\
     By similar arguments, we obtain $\delta(G) = 2p$ for each $G \in \Phi_{23}$.
    \item $\mathbf{\Phi_{21}}$ {\bf family:} For $G\in \Phi_{21}$, $\mathcal{Z}(G)\cong C_p \times C_p$ and $\cd(G) = \{ 1,p,p^2 \}$. Let $\chi$ be a faithful character of $G$ such that $\chi(1) = \delta(G)$. From Lemma \ref{lemma:Bardestanietal}, $\chi = \chi_1+\chi_2$, for some $\chi_1, \chi_2 \in \Irr(G)$. By the proof of Lemma 4.28 of \cite{O'BPU24}, $\chi_i(1) = p^2$ for $i=1,2$. Hence, $\delta(G) = 2p^2$.
    
\end{itemize}

We summarize the above discussion as the following table.

 \begin{longtable}[c]{| l| c |}
		\caption{ Representation dimension of groups of order $p^{6}$, where $p \geq 5$ is a prime, via their isoclinic families  \label{t:delta G p6}}\\

		\hline
		Isoclinic family &  $\delta(G)$  \\ 
		\hline
		\endfirsthead
		
		\hline
		\multicolumn{2}{|c|}{Continuation of Table \ref{t:delta G p6}}\\
		\hline
		Isoclinic family &  $\delta(G)$  \\   
		\hline
		\endhead
		
		\hline
		\endfoot

		\endlastfoot
		\hline
		
		$\Phi_2$ &  $p$, $p+1$, $p+2$, or $p+3$ \\
		\hline
		$\Phi_3$ & $p$, $p+1$, or $p+2$ \\
		\hline
		$\Phi_4$, $\Phi_6$ & $2p$, or $2p+1$ \\
		\hline
		$\Phi_i$, where $i\in \{ 5, 7, 8, 10 \}$ & $p^2$, or $p^2+1$ \\
		\hline 
		$\Phi_9$ & $p$, or $p+1$ \\ 
		\hline
		$\Phi_{11}$ & $3p$ \\
		\hline
		$\Phi_i$, where $i\in \{ 12, 16, 17, 19, 23 \}$ & $2p$\\
		\hline 
		$\Phi_i$, where $i\in \{ 13, 18, 20 \}$ & $p+p^2$ \\
		\hline
		$\Phi_i$, where $i\in \{ 14, 22, 24, 25, \ldots, 43 \} - \{ 35 \}$ & $p^2$ \\
		\hline
		$\Phi_{15}$, $\Phi_{21}$ & $2p^2$ \\
		\hline
		$\Phi_{35}$ & $p$ \\
		\hline

	\end{longtable}
    
We conclude by the observation that in Table \ref{t:delta G p6}, if $G$ and $H$ are isoclinic groups of order $p^6$ such that $\mathcal{Z}(G) \cong \mathcal{Z}(H)$, then $\delta(G) = \delta(H)$. However, we are unable to prove it at this moment.

\section{Problems}

There are a few open problems for the readers. 
\begin{enumerate}
    \item In \cite[Theorem 3.1]{KKS23}, Kaur et al. have computed the representation dimension for the extraspecial $p$-groups, which are in fact, special $p$-groups with cyclic center. Hence, one would like to examine the representation dimension for special $p$-groups.
    \item We observe that if $G$ and $H$ are isoclinic groups of order $p^6$ such that $\mathcal{Z}(G) \cong \mathcal{Z}(H)$, then $\delta(G) = \delta(H)$ (see Table \ref{t:delta G p6}). This motivates us to ask the follwing question.\\
    Let $G$ and $H$ be isoclinic groups of equal order such that $\mathcal{Z}(G) \cong \mathcal{Z}(H)$. Whether $\delta(G) = \delta(H)$?
    
\end{enumerate}

\section*{Declaration of competing interest}
The authors declare that they have no known competing financial interests or personal relationships that could have appeared to influence the work reported in this paper.

\section*{Acknowledgements} 
 \noindent The first-named author acknowledges the research support of the Department of Science and Technology (INSPIRE Faculty No. DST/INSPIRE/04/2023/001200), Govt. of India. A part of this research work was carried out when the third-named author was a postdoctoral fellow at IISER Mohali, India, and he acknowledges IISER Mohali for an Institute postdoctoral fellowship during that time.


\begin{thebibliography}{99}

\bibitem{BMS16} M. Bardestani, K. Mallahi-Karai and H. Salmasian, Minimal dimension of faithful representations for $p$-groups, J. Group Theory, {\bf 19} (04) (2016), 589-608.

\bibitem{B95} H. Behravesh, Quasi-permutation representations of finite groups, Ph.D. Thesis, University
of Manchester (1995).

\bibitem{B00} H. Behravesh, Quasi-permutation Representations of metacyclic $p$-groups with non-cyclic center, Southeast Asian Bull. Math., {\bf 24} (2000), 345--353.


\bibitem{BZbook} Y. G. Berkovich and E. M. Zhmud’, Characters of finite groups, Part 1. American
Mathematical Soc. (1998).

\bibitem{CKR11} S. Cernele, M. Kamgarpour and Z. Reichstein, Maximal representation dimension of finite $p$-groups, J. Group Theory, {\bf 14} (04) (2011), 637--647.

\bibitem{DS96} R. Dark and C. M. Scoppola, On Camina group of prime power order, J. Algebra, {\bf 181}(3) (1996), 787--802.

\bibitem{FAM01} G. A. Fern\'andez-Alcober and A. Moret\'o, Groups with two extreme character degrees and their normal subgroups, Trans. Amer. Math. Soc., {\bf 353}(6) (2001), 2171--2192.

\bibitem{GAP} The GAP Group, Gap – groups, algorithms, and programming, version 4.13.1, 2024.

\bibitem{HS64} Marshall Hall, Jr., and James K.\ Senior, 
{\it The Groups of Order $2^n$ ($n \leq 6$)}, Macmillan, New York, 1964.

\bibitem{Baldwin87} D.Baldwin, The Groups of order $3^{n}$; for $n^{6}$, B.Sc. Thesis, Australian National University, 1987.

\bibitem{B69} B.G. Basmaji,  Monomial Representations and Metabelian Groups, Nagoya Math. J., {\bf 35} (1969), 99-107.

\bibitem{SmallGroups} H.U.\ Besche, B.\ Eick and E.A.O'Brien, A millenium project: constructing small groups, Internat. J. Algebra Comput., {\bf 12}{2002}, 623--644.

\bibitem{H84} G. How, Special Classes of Monomial Groups III, Chinese J. Math., {\bf 12}(3) (1984), 199--211.

\bibitem{IBook} I. M. Isaacs, Character theory of finite groups, Academic, New York, 1976.

\bibitem{IL15} I. M. Isaacs, M. L. Lewis, Camina p-groups that are generalized Frobenius complements, Arch. Math. (Basel), {\bf 104}(5) (2015), 401--405.

\bibitem{J80} R. James, The groups of order $p^6$ ($p$ an odd prime), Math. Comp., {\bf 34}(150) (1980), 613--637.

\bibitem{KM08} N. A. Karpenko and A. S. Merkurjev, Essential dimension of finite $p$-groups, Invent. Math., {\bf 172} (2008), 491--508.

\bibitem{KKS23} G. Kaur, A. Kulshrestha, A. Singh, Representation dimension of some finite groups, (2026) \href{https://arxiv.org/abs/2308.01612}{arXiv:2308.01612 [math.GR]} 

\bibitem{L14}  M. L. Lewis, Classifying Camina groups: a theorem of Dark and Scoppola. Rocky Mountain J. Math., {\bf 44}(2) (2014), 591--597.

\bibitem{L09a} M. L. Lewis, The vanishing-off subgroup, J. Algebra, {\bf 321}(4) (2009), 1313--1325.
		
\bibitem{L09b} M. L. Lewis, Character tables of groups where all nonlinear irreducible characters vanish off the center, Ischia group theory 2008 (2009), 174--182.

\bibitem{M81} I. D. Macdonald, Some p-groups of Frobenius and extra-special type, Isr. J. Math., {\bf 40} (1981), 350--364.

\bibitem{M99} A. Mann, Minimal Characters of $p$-groups, J. Group Theory,  {\bf 02} (1999), 225--250.

\bibitem{M69}
John McKay,  Table errata: The groups of order $2^n (n \leq 6)$ 
(Macmillan, New York, 1964) by M.\ Hall, Jr.\ and J.K.\ Senior, 
Math.\ Comp.\ {\bf 23} (1969), no.\ 107, 691--692.

\bibitem{MR10} A. Meyer and Z. Reichstein, Some consequences of the Karpenko-Merkurjev theorem, Documenta Math., Extra Volume dedicated to Andrei A. Suslin’s Sixtieth Birthday (2010), 445--457.

\bibitem{M21} A. Moret\'{o}, On the minimal dimension of a faithful linear representation of a finite group (2021) \href{https://arxiv.org/abs/2102.01463}{arXiv:2102.01463v3 [math.GR]}

\bibitem{NO'BV04} M.F. Newman, E.A. O'Brien and M.R. Vaughan-Lee, Groups and nilpotent Lie rings whose order is the sixth power of a prime, J. Algebra, {\bf 278} (2004), 383--401.

\bibitem{NO'BV23} M.F. Newman, E.A. O'Brien and M.R. Vaughan-Lee, 
Presentations for the groups of order $p^6$ for prime $p \geq 7$ (2023)
\href{http://arxiv.org/abs/2302.02677}{arXiv:2302.02677 [math.GR]}

\bibitem{O'BPU24} E. A. O'Brien, S. K. Prajapati, and A. Udeep. Minimal degrees for faithful permutation representations of groups of order $p^6$ where $p$ is an odd prime, J. Algebraic Combin., {\bf 60}(02) (2024), 319--388.

\bibitem{PS14} S. K. Prajapati and B. Sury, On the total character of finite groups, Int. J. Group Theory, {\bf 03}(3) (2014), 47--67.

\bibitem{PU23a} S. K. Prajapati and A. Udeep, Minimal Faithful Quasi-Permutation Representation Degree of $p$-Groups with Cyclic Center, Proc. Indian Acad. Sci. Math. Sci., {\bf 133}(38) (2023).

\bibitem{PU23b} S. K. Prajapati and A. Udeep, On faithful quasi-permutation representation of VZ groups and Camina $p$-groups, Comm. Algebra, {\bf 51}(4) (2023), 1431--1446.

\bibitem{TZ00} P. H. Tiep and A. E. Zalesskii, Some aspects of finite linear groups: A survey, J. Math. Sci., {\bf 100} (2000), 1893--1914.

\bibitem{U23} A. Udeep, On the Minimal Faithful Quasi-permutation Representation Degree of Finite $p$-groups, Indian Institute of Technology Bhubaneswar, Ph.D. Thesis (2023).

\bibitem{W75} D. Wright, Degrees of Minimal Embeddings for Some Direct Products, Amer. J. Math., {\bf 97}(04) (1975), 897--903.
    
\end{thebibliography}
\end{document}